\documentclass{article}
\usepackage{geometry}
\usepackage{titling}
\AtEndDocument{\bigskip{\footnotesize%
\textsc{$^*$School of Computing, Dehradun Institute of Technology, Dehradun, Uttarakhand–248009, India} \par  
  \textit{E-mail address:} \texttt{suryagiri456@gmail.com} \par

}}
\usepackage{doc}

\usepackage{url}

\usepackage{graphicx}
\usepackage{epstopdf}
\usepackage{amsthm}
\usepackage{amssymb}
\usepackage{amsmath}
\usepackage{enumitem}

\usepackage{hypdoc}

\usepackage{array}
\usepackage{enumitem}
\usepackage[utf8]{inputenc}
\usepackage[english]{babel}
\newtheorem{theorem}{Theorem}
\newtheorem{corollary}{Corollary}[theorem]
\newtheorem{lemma}[theorem]{Lemma}
\newtheorem*{definition}{Definition}

\DeclareMathOperator{\RE}{Re}

\usepackage{varwidth}
\usepackage{lineno, hyperref}
\usepackage{graphicx}
\usepackage{epstopdf}
\usepackage{amsmath}
\usepackage{amsthm}
\usepackage{enumitem}
\usepackage{amsthm}
\usepackage{float}
\usepackage{subcaption}
\usepackage{caption}
\usepackage{authblk}
\usepackage{abstract}
\usepackage{thmtools}
\declaretheorem[numbered=no,
name=Theorem A]{theoremA}

\declaretheorem[numbered=no,
name=Theorem B]{theoremB}

\begin{document}
\title{Toeplitz determinant and generalized Zalcman conjecture for subclasses of starlike mappings in higher dimensions}
\author{Surya Giri$^*$}


\date{}


	

\maketitle	

\begin{abstract}
    \noindent     In  this manuscript, we establish sharp bounds of the third-order Toeplitz determinant and a particular case of the generalized Zalcman functional for a  class of holomorphic mappings defined on the unit ball in a complex Banach space. The obtained estimates yield corresponding bounds for several subclasses of starlike mappings as special cases and also provide higher-dimensional extensions of certain known results from the classical one-dimensional theory.
\end{abstract}
\vspace{0.5cm}
	\noindent \textit{Keywords:} Starlike mappings, Strongly starlike mappings,  Coefficient problems, Toeplitz determinants, Zalcman conjecture.\\
	\noindent \textit{AMS Subject Classification:} 32H02; 30C45.

\section{Introduction}\label{sec1}
   Let $\mathcal{A}$ denote the class of all analytic functions of the form $f(\zeta)=\zeta+\sum_{n=2}^\infty a_n \zeta^n$ defined on the open unit disk
   $\mathbb{U}=\{ \zeta\in \mathbb{C}: \vert \zeta \vert < 1 \}.$
   By $\mathcal{S}$, we represent the subclass of $\mathcal{A}$ containing all univalent functions. Further, let
   $\mathcal{S}^*$, $\mathcal{S}^*(\alpha)$ and $\mathcal{SS}^*(\beta)$ be the subclasses of $\mathcal{S}$ consisting of starlike functions, starlike functions of order $\alpha$ ($0 \leq  \alpha <1$) and strongly starlike functions of order $\beta$ ($0 <\beta \leq 1$), respectively. For more details on these classes, we refer the reader to~\cite{Goodman}.

   Ali et al.~\cite{AliThoVas} first studied the Toeplitz determinant for certain subclasses of $\mathcal{S}$. For $f \in \mathcal{A},$ Toeplitz determinant is defined by
\begin{equation*}
     T_{m,n}(f)= \begin{vmatrix}
	a_n & a_{n+1} & \cdots & a_{n+m-1} \\
	a_{n+1} & a_n & \cdots & a_{n+m-2}\\
	\vdots & \vdots & \ddots & \vdots\\
    a_{n+m-1} & a_{n+m-2} & \cdots & a_n\\
	\end{vmatrix}.
\end{equation*}
    Consequently, we have
    $ T_{2,1}(f) = a_2^2 - a_3^2$,  $ T_{3,1}(f) = 1 - 2 a_2^2 + 2 a_2^2 a_3 -  a_3^2  $  and
\begin{equation}
      T_{3,2}(f)  =  a_2^3-2 a_2 a_3^2- a_2 a_4^2 + 2 a_3^2 a_4 .
\end{equation}
     Ali et al.~\cite{AliThoVas} obtained the sharp bounds of  $\vert T_{2,1}(f)\vert$, $\vert T_{3,1}(f)\vert$ and $\vert T_{3,2}(f)\vert$ for various subclasses of $\mathcal{S}$. In particular, they established the following result.
\begin{theoremA}\label{thmmA}\cite{AliThoVas}
       If $f\in \mathcal{S}^*$, then $\vert T_{3,2}(f)\vert \leq 84$. The inequality is sharp.
\end{theoremA}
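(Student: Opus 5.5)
The natural approach is to bound the four terms of $T_{3,2}(f)$ separately using the de Branges (Bieberbach) inequality $|a_n|\le n$. This is available because $\mathcal{S}^*\subset\mathcal{S}$; for starlike functions it also follows directly from the Carathéodory lemma $|p_k|\le 2$ applied to $\zeta f'(\zeta)/f(\zeta)=1+\sum_{k\ge1}p_k\zeta^k$, via the recursion $(n-1)a_n=\sum_{k=1}^{n-1}p_k a_{n-k}$. With $|a_2|\le 2$, $|a_3|\le 3$ and $|a_4|\le 4$, the triangle inequality gives
\[
|T_{3,2}(f)|\le |a_2|^3+2|a_2||a_3|^2+|a_2||a_4|^2+2|a_3|^2|a_4|\le 8+36+32+72=148 .
\]
This exceeds the claimed bound $84$, so the crude estimate is not enough. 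Some cancellation among the terms must be exploited, or a finer grouping of the determinant is needed.

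To improve on this, I would regroup
\[
T_{3,2}(f)=a_2\bigl(a_2^2-a_4^2\bigr)+2a_3^2\bigl(a_4-a_2\bigr)\quad(\text{up to the }a_3^2a_2\text{ term}),
\]
more precisely $T_{3,2}=(a_2-a_4)\bigl(a_2^2+a_2a_4-2a_3^2\bigr)$. Indeed, expanding gives $a_2^3+a_2^2a_4-2a_2a_3^2-a_2^2a_4-a_2a_4^2+2a_3^2a_4$, which matches. Then I would estimate each factor separately. For the first factor, $|a_4-a_2|$, I would express it through $p_1,p_2,p_3$ and use known sharp bounds for linear functionals on the Carathéodory class; the expected value is $|a_4-a_2|\le 4-2+\ldots$, aiming at the value $2$ attained by the Koebe function $k(\zeta)=\zeta/(1-\zeta)^2$, for which $a_n=n$. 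For the second factor, $|a_2^2+a_2a_4-2a_3^2|$, the Koebe function gives $4+8-18=-6$, so $|T_{3,2}(k)|=2\cdot 6=12$. That is far below $84$, so Koebe is \emph{not} the extremal function.

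The extremal function is more likely a rotation such as $\zeta/(1-i\zeta)^2$ or $\zeta/(1-\zeta^2)\cdot(\cdot)$ with coefficients $a_n=i^{n-1}n$. Then $a_2=2i$, $a_3=-3$, $a_4=-4i$, and $T_{3,2}=(2i+4i)\bigl(-4+8-18\bigr)=6i\cdot(-14)$, of modulus $84$. This identifies the sharpness example, so the plan is to prove $|a_2-a_4|\le 6$ (immediate from $|a_n|\le n$) and $|a_2^2+a_2a_4-2a_3^2|\le 14$ (also immediate: $4+8+18=30$ is too large, so this bound needs work).

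The main obstacle is the sharp bound $|a_2^2+a_2a_4-2a_3^2|\le 14$ on $\mathcal{S}^*$. I would write $a_2=p_1$, $a_3=(p_2+p_1^2)/2$, $a_4=(2p_3+3p_1p_2+p_1^3)/6$, and substitute the Libera–Złotkiewicz parametrization of $p_2,p_3$ in terms of $p_1=c\in[0,2]$ (after rotation) and $\zeta_1,\zeta_2$ in the closed disk. I would then maximize over $|\zeta_1|,|\zeta_2|\le1$ and $c$, checking that the maximum $14$ occurs at $c=2$. If this factorized route turns out lossy, I would fall back to bounding $|T_{3,2}|$ directly by the same parametrization.
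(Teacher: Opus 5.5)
\textbf{Gap: the only nontrivial step is not proved, and the proposed method for it fails as written.}

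Your overall route coincides with the paper's, which recovers this result as the case $\Phi(\zeta)=(1+\zeta)/(1-\zeta)$, $B_1=B_2=B_3=2$, of Theorem~\ref{thmB}. The steps are: factor $T_{3,2}=(a_2-a_4)(a_2^2+a_2a_4-2a_3^2)$, bound $|a_2-a_4|\le|a_2|+|a_4|\le 6$, prove $|a_2^2+a_2a_4-2a_3^2|\le 14$, and observe that $\zeta/(1-i\zeta)^2$ attains both bounds at once.

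The missing inequality is $|a_2^2+a_2a_4-2a_3^2|\le 14$. You normalize $p_1=c\in[0,2]$ ``after rotation'', but this functional is not rotation-invariant in modulus. Under $f\mapsto e^{-i\theta}f(e^{i\theta}\zeta)$ one has $a_n\mapsto e^{i(n-1)\theta}a_n$, so
\[
a_2^2+a_2a_4-2a_3^2\;\longmapsto\;e^{2i\theta}\bigl(a_2^2+e^{2i\theta}(a_2a_4-2a_3^2)\bigr),
\]
because $a_2^2$ has weight $2$ while $a_2a_4$ and $a_3^2$ have weight $4$. Your own computations show this: the Koebe function gives $6$, its rotation $\zeta/(1-i\zeta)^2$ gives $14$.

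Consequences for your plan:
\begin{itemize}
\item At $c=2$ with $p_1$ real, the Libera--Z\l otkiewicz formulas force $p_2=p_3=2$. That is the Koebe function, with value $6$, so the check ``maximum $14$ at $c=2$'' cannot succeed.
\item Maximizing over real $c$ only handles the subclass with $a_2\ge 0$. There the maximum is strictly below $14$, since equality in the sharp bound forces $p_1=\pm 2i$. So you would not obtain a valid bound for $\mathcal{S}^*$.
\item Your fallback of parametrizing $T_{3,2}$ directly has the same defect, since $T_{3,2}$ is not rotation-invariant either.
\end{itemize}

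\textbf{Repair 1: keep the phase.} Write $p_k=e^{ik\theta}\tilde p_k$ with $\tilde p_1=c\ge 0$. Then
\[
6(a_2^2+a_2a_4-2a_3^2)=e^{2i\theta}\bigl(6c^2+e^{2i\theta}G\bigr),\qquad G=-2c^4-3c^2\tilde p_2+2c\tilde p_3-3\tilde p_2^2 .
\]
You must then show $6c^2+|G|\le 84$ over the full parameter range. Equality holds at $c=2$, where $|G|=60$, with $e^{2i\theta}=-1$.

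\textbf{Repair 2: the paper's route.} The paper avoids the phase issue by passing to the Schwarz function $\omega(\zeta)=\sum_{n\ge1} c_n\zeta^n$ with $p=(1+\omega)/(1-\omega)$. This gives
\[
6(a_2^2+a_2a_4-2a_3^2)=24c_1^2-12c_2^2+8c_1\bigl(c_3-4c_1c_2-\tfrac{15}{2}c_1^3\bigr).
\]
Combine this with $|c_2|\le 1-|c_1|^2$ and the Prokhorov--Szynal bound $|c_3+q_1c_1c_2+q_2c_1^3|\le |q_2|$ for $(q_1,q_2)=(-4,-\tfrac{15}{2})\in\Theta_4$ (Lemma~\ref{lemma2}). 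This yields the bound $12(1+|c_1|^4)+60|c_1|\le 84$, which involves only moduli, so no normalization of the argument is needed.
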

      Ahuja et al.~\cite{AhuKhaRav} derived the bounds of  $\vert T_{2,1}(f)\vert$ and $\vert T_{3,1}(f)\vert$ for the classes $\mathcal{S}^*(\alpha)$ and $\mathcal{SS}^*(\beta)$.  Recently, Giri and Kumar~\cite{GirKum1} determined the sharp estimate of $\vert T_{2,1}(f)\vert$ formed over the logarithmic coefficients of functions belonging to $\mathcal{S}^*$ and $\mathcal{S}^*(\alpha)$. For further work on Toeplitz determinants for various classes, see~\cite{LecSimSmi,ObrTun}.

   As it is well known, one of the important and very interesting problems of classical and modern complex analysis is to find the extent in which the results of one dimensional theory can be generalized to several complex variables. 
   Motivated by this problem, Giri and Kumar~\cite{GirKum3} extended the study of Toeplitz determinants to higher dimensions.  They derived the sharp estimates of $\vert T_{2,1}(f)\vert$ and $\vert T_{3,1}(f)\vert$ for a class of holomorphic mappings defined on the unit ball in a complex Banach space and on the unit polydisk in $\mathbb{C}^n$. The work was further generalized for a class of holomorphic mappings with $k-$fold symmetric by Xu et al.~\cite{XuHeXu}. More recently, Giri~\cite{Gir1} established the sharp bounds of $\vert T_{2,2}(f)\vert$ for a class related to starlike mappings in several complex variables. In addition, the problems of estimating of $\vert T_{2,1}(f)\vert$ and $\vert T_{3,1}(f)\vert$  were studied for well known subclasses of holomorphic mappings in higher dimensions~\cite{GirKum4,XuJia}.

   In the existing literature, the problems of estimating $\vert T_{2,1}(f)\vert$ and $\vert T_{3,1}(f)\vert$ have been investigated on various domains, while the corresponding problem for $\vert T_{3,2}(f)\vert$ remained unaddressed. In this paper, we address this problem by establishing the sharp estimate of $\vert T_{3,2}(f)\vert$ for a subclass of holomorphic mappings defined on the unit ball in a complex Banach space. As special cases, our results yield bounds for subclasses of starlike mappings and lead to an extension of Theorem \hyperref[thmmA]{A} to several complex variables.

    In 1960, Zalcman proposed the conjecture $\vert a_n^2 - a_{2n-1} \vert \leq (n-1)^2 $  for $f \in \mathcal{S}$.  This conjecture has its own interest, but the main point is that it provides the Bieberbach conjecture $ |a_n|\leq n$. Both conjectures attracted considerable attention and were extensively studied by many authors. The Bieberbach conjecture has now been proved for all $n$, whereas Zalcman's conjecture is proved only for $n\leq 6$ and for certain subclasses of univalent functions~\cite{BroTsa,Kru,Ma2}.
    In 1999, Ma~\cite{Ma} proposed a generalized Zalcman conjecture for $f\in \mathcal{S}$ that
    $$ \vert a_n a_m - a_{n+m-1} \vert \leq (n-1)(m-1)  \quad \forall\; m,n\geq 2,$$
      which is still an open problem, however he proved it for the classes $\mathcal{S}^*$ and $\mathcal{S}_\mathbb{R}$, where $\mathcal{S}_\mathbb{R}$ contains univalent functions with real coefficients. Ravichandran and Verma~\cite{RavVer} established the conjecture for certain subclasses of $\mathcal{S}$. Furthermore, Cho et al.~\cite{ChoKwoLecSim} proved it for the classes $\mathcal{S}^*$, $\mathcal{S}^*(\alpha)$ and $\mathcal{SS}^*(\beta)$ when $n=2$ and $m=3$. The following theorem is a particular case of~\cite[Theorem 2.3]{Ma}.
\begin{theoremB}\cite{Ma}\label{thmmB}
    If $f(\zeta)\in  \zeta + \sum_{n=2}^\infty a_n \zeta^n \in \mathcal{S}^*$, then $\vert a_2 a_3 - a_4\vert \leq 2 $. The inequality is sharp.
\end{theoremB}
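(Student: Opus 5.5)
Since $f\in\mathcal{S}^*$ means $\RE\bigl(\zeta f'(\zeta)/f(\zeta)\bigr)>0$ on $\mathbb{U}$, I would write
\[
\frac{\zeta f'(\zeta)}{f(\zeta)}=p(\zeta)=1+\sum_{k=1}^\infty c_k\zeta^k,
\]
with $p$ in the Carath\'eodory class $\mathcal{P}$. Comparing coefficients gives
\[
a_2=c_1,\qquad a_3=\tfrac12\bigl(c_2+c_1^2\bigr),\qquad a_4=\tfrac13\Bigl(c_3+\tfrac32 c_1c_2+\tfrac12 c_1^3\Bigr).
\]
Substituting, the constant terms cancel:
\[
a_2a_3-a_4=\tfrac16 c_1^3+\tfrac12 c_1c_2-\tfrac13 c_3-\tfrac12 c_1c_2=\tfrac16 c_1^3-\tfrac13 c_3 .
\]
So
\[
a_2a_3-a_4=-\tfrac13\Bigl(c_3-\tfrac12 c_1^3\Bigr).
\]
It therefore suffices to show $\bigl|c_3-\tfrac12 c_1^3\bigr|\le 6$ for $p\in\mathcal{P}$.

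The main tool is the standard lemma for $\mathcal{P}$: $|c_3-2Bc_1c_2+Dc_1^3|\le 2$ on a suitable $(B,D)$ region (Ali's lemma / Libera–Złotkiewicz). I would avoid delicate parameter regions and use the crude triangle inequality:
\[
\Bigl|c_3-\tfrac12 c_1^3\Bigr|\le |c_3|+\tfrac12|c_1|^3\le 2+\tfrac12\cdot 8=6,
\]
using Carathéodory's bound $|c_k|\le 2$. This gives $|a_2a_3-a_4|\le 2$.

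For sharpness I would take the Koebe function $k(\zeta)=\zeta/(1-\zeta)^2$, for which $a_n=n$. Then $a_2a_3-a_4=6-4=2$, so equality holds. Equivalently, $p(\zeta)=(1+\zeta)/(1-\zeta)$ has $c_k=2$, giving $c_3-\tfrac12c_1^3=2-4=-2$. That yields only $2/3$, which contradicts the Koebe value, so the coefficient formulas need rechecking.

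The correct recurrence is $(n-1)a_n=\sum_{k=1}^{n-1} c_k a_{n-k}$, which gives
\[
a_3=\tfrac12\bigl(c_2+c_1^2\bigr),\qquad a_4=\tfrac13\bigl(c_3+c_2c_1+c_1a_3\bigr)=\tfrac13\Bigl(c_3+\tfrac32c_1c_2+\tfrac12c_1^3\Bigr).
\]
With $c_k=2$ this gives $a_4=\tfrac13(2+6+4)=4$, so $a_4$ was correct. Recomputing $a_2a_3$ with $a_3=\tfrac12(c_2+c_1^2)$ gives $a_2a_3=\tfrac12c_1c_2+\tfrac12c_1^3$. The earlier $\tfrac16 c_1^3$ was an arithmetic slip; it should be $\tfrac12-\tfrac16=\tfrac13$. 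Hence
\[
a_2a_3-a_4=\tfrac13\bigl(c_1^3-c_3\bigr).
\]

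Now the triangle inequality only gives $(8+2)/3=10/3$, which is not sharp. This is the main obstacle. To handle it I would use the Libera–Złotkiewicz representation
\[
2c_2=c_1^2+(4-c_1^2)x,\qquad 4c_3=c_1^3+2(4-c_1^2)c_1x-(4-c_1^2)c_1x^2+2(4-c_1^2)(1-|x|^2)y,
\]
with $|x|,|y|\le1$. By rotation invariance I may normalize $c_1=c\in[0,2]$. Then
\[
4(c^3-c_3)=3c^3-2(4-c^2)cx+(4-c^2)cx^2-2(4-c^2)(1-|x|^2)y.
\]
Bounding with $|y|\le1$ and $|x|=r$ gives
\[
4|c^3-c_3|\le 3c^3+(4-c^2)\bigl(2cr+cr^2+2-2r^2\bigr).
\]
The bracket is maximized at $r=1$ when $c\ge 2/3$, and at $r=c/(2-c)$ otherwise. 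In the first case the bound is $3c^3+3c(4-c^2)=12c\le 24$, i.e.\ $|c^3-c_3|\le 6$. In the second case a direct check should also stay below $24$. Thus $|a_2a_3-a_4|\le 2$, with equality at $c=2$, which is the Koebe function, confirming Theorem~B.
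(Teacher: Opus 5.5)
Your final argument, after the self-correction, is correct, but it takes a different route from the paper. The paper does not prove Theorem~B directly (it is quoted from Ma); it recovers it as the case $X=\mathbb{C}$, $\Phi(\zeta)=(1+\zeta)/(1-\zeta)$ of Theorem~\ref{thmZalc}.

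\textbf{The paper's route.} It reaches the same identity you do: $a_2a_3-a_4=\tfrac13\bigl((h'(0))^3-h'''(0)/6\bigr)$ with $h=p=\zeta f'/f$, which is $\tfrac13(c_1^3-c_3)$ in your notation. It does not then work in the Carath\'eodory class. Instead it writes $h=\Phi\circ\omega$ with a Schwarz function $\omega(\zeta)=\sum w_k\zeta^k$. This turns the functional into $-\tfrac{B_1}{3}\bigl(w_3+q_5w_1w_2+q_6w_1^3\bigr)$ with $(q_5,q_6)=(2,-3)\in\Theta_4$. The Prokhorov--Szynal estimate (Lemma~\ref{lemma2}) then gives the bound $\tfrac{B_1}{3}|q_6|=2$.

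\textbf{Your route.} You use the Libera--Z\l otkiewicz parametrization of $c_2,c_3$ followed by an elementary one-variable maximization. This is self-contained for $\mathcal{S}^*$ and avoids the heavy Prokhorov--Szynal lemma. The paper's route instead handles every admissible $\Phi$ uniformly, including order-$\alpha$, strongly starlike, and other cases whose extremals are not Koebe-type. The price is that lemma and its case decomposition into the regions $\Theta_i$.

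\textbf{Repairs to your last step.} Neither of these affects the conclusion.
\begin{itemize}
\item The critical point $r=c/(2-c)$ of $g(r)=2+2cr+(c-2)r^2$ lies in $[0,1]$ exactly when $c\le 1$. So the maximum is at $r=1$ only for $c\ge 1$, not for $c\ge 2/3$. On $[2/3,1)$ the value $12c$ is not an upper bound: at $c=2/3$ the true maximum is $8+32/27>8$.
\item The remaining case must actually be computed. One gets $g\bigl(c/(2-c)\bigr)=2+c^2/(2-c)$, which gives $4|c^3-c_3|\le 4c^3+8\le 12$ for $c\le 1$.
\item You can skip the case split altogether. Since $g(r)\le 2+2c$ on $[0,1]$, you get $4|c^3-c_3|\le c^3-2c^2+8c+8$. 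This is increasing on $[0,2]$, because its derivative $3c^2-4c+8$ has no real zeros, and it equals $24$ only at $c=2$.
\item In a write-up, delete the abandoned first computation: the expression $c_3-\tfrac12c_1^3$ and its ``bound'' are incorrect.
\end{itemize}
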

    Giri~\cite{Giri2} extended this estimate to a subclass of starlike mappings defined on the unit ball in a complex Banach space and on bounded starlike circular  domains in $\mathbb{C}^n$. In the present paper, we investigate the same problem for a class of holomorphic mappings on the unit ball in a complex Banach space. The derived result not only generalizes the work in~\cite{Giri2} but also yields sharp estimates for other subclasses of starlike mappings in several complex variables as special cases.

   Let $X$ be a complex Banach space with norm $\| \cdot\|$ and $\mathbb{B}$ represent the unit ball in $X$. Let $L(X,Y)$ denote the set of all continuous linear operators from $X$ into another complex Banach space $Y$. For each $z \in X\setminus\{0\}$, define the set
     $$ T(z) = \{ l_z \in L(X,\mathbb{C}) : l_z(z) = \| z\|, \| l_z \| = 1\}.$$
     The set $T(z)$ is non empty by the Hahn-Banach Theorem.
   Let $\mathcal{H}(\mathbb{B})$ represent the class of all holomorphic mappings from $\mathbb{B}$ into $X$. If $f\in \mathcal{H}(\mathbb{B})$, then for each $z\in \mathbb{B}$ and for all $w$ in some neighborhood of $z$, the function $f$ admits the expansion
    $$f(w)= \sum_{n=0}^\infty \frac{1}{n!} D^n f(z)((w-z)^n),$$
     where $D^n f(z)$ is  the $n^{th}$ Fr\'{e}chet derivative of $f$ at $z$, defined as a bounded symmetric $n-$linear mapping from $\prod_{j=1}^n X$ into $X$. Furthermore,
        $$ D^n f(z)((w-z)^n)= D^n f(z) \underbrace{( w-z, w-z, \cdots, w-z) }_\text{ n -times}.$$
   A mapping $f \in \mathcal{H}(\mathbb{B})$ is called biholomorphic if it has a holomorphic inverse defined on $f(\mathbb{B})$. It is said to be locally biholomorphic if the derivative $Df(z)$ is invertible with a bounded inverse for each $z \in \mathbb{B}$.
   Analogous to the one-dimensional case, the mapping $f$ is normalized if $f(0)=0$ and $Df(0)=I$, where $I$ is the identity operator on $X$. The collection of all normalized biholomorphic mappings on $\mathbb{B}$ is denoted by $\mathcal{S}(\mathbb{B})$.
    Hamada et al.~\cite{Ham4} defined the following class.
\begin{definition}~\cite{Ham4}
   Let $f: \mathbb{B} \rightarrow X$ be a normalized locally biholomorphic mapping and $\alpha \in (0,1)$. The mapping $f$ is said to be starlike of order $\alpha$ if
   $$ \left\vert \frac{1}{ \| z\|} l_z ([D f(z)]^{-1} f(z)) - \frac{1}{2 \alpha} \right\vert < \frac{1}{ 2 \alpha}, \quad \forall \;z \in \mathbb{B}\setminus\{0\}, \; l_z \in T(z) . $$
   When $\mathbb{B} = \mathbb{U}$ and $X = \mathbb{C}$,  the condition is equivalent to
   $$ \RE \Big( \frac{ \zeta f'(\zeta)}{f(\zeta)}\Big) > \alpha, \quad \zeta \in \mathbb{U}. $$
  We denote by $\mathcal{S}^*_\alpha (\mathbb{B})$ the class of all starlike mappings of order $\alpha$ on $\mathbb{B}$. In the case $\mathbb{B} = \mathbb{U}$ and $X = \mathbb{C}$, we write $\mathcal{S}^*_{\alpha}(\mathbb{U})=\mathcal{S}^*(\alpha).$
\end{definition}
\begin{definition}\cite{Kohr2}
   Let $f:\mathbb{B}\to X$ be a normalized locally biholomorphic mapping and $\beta\in(0,1]$. The mapping $f$ is said to be strongly starlike of order $\beta$ if
   $$ \left\vert \arg l_z ([D f(z)]^{-1} f(z))  \right\vert < \frac{\pi}{ 2} \beta, \quad \forall z \in \mathbb{B}\setminus\{0\}, \; l_z \in T(z) . $$
    When $\mathbb{B} = \mathbb{U}$ and $X = \mathbb{C}$,  the condition is equivalent to
   $$ \bigg\vert \arg \Big( \frac{ \zeta f'(\zeta)}{f(\zeta)}\Big)\bigg\vert < \frac{\pi}{ 2}  \beta, \quad \zeta \in \mathbb{U}. $$
  We denote by $\mathcal{SS}^*_\beta(\mathbb{B})$ the class of all strongly starlike mappings of order $\beta$ on $\mathbb{B}$. In the case $\mathbb{B}=\mathbb{U}$ and $X=\mathbb{C}$, we write $\mathcal{SS}^*_\beta(\mathbb{U})=\mathcal{SS}^*(\beta).$
\end{definition}
   For a biholomorphic function $\Phi : \mathbb{U} \rightarrow \mathbb{C}$ such that $\Phi(0)=1$ and $\RE \Phi(\zeta)>0$ on $\mathbb{U}$, Graham et al.~\cite{GraHamKoh} introduced the class
\begin{equation}\label{Mphi}
   \mathcal{M}_\Phi = \Big\{ p \in \mathcal{H}(\mathbb{B}) : p(0) =0, D(p(0))=I, \frac{\| z\|}{l_z ( p(z) )} \in \Phi(\mathbb{U}), z\in \mathbb{B}\setminus \{ 0 \} \Big\}.
\end{equation}
   In the case $\mathbb{B} = \mathbb{U}$ and $X = \mathbb{C}$,  this condition reduces to
    $$\mathcal{M}_\Phi = \left\{ p \in \mathcal{H}(\mathbb{U}) : p(0) =0, p'(0)=1, \frac{\zeta}{ p(\zeta) } \in \Phi(\mathbb{U}), \zeta \in \mathbb{U} \right\}.$$
  To establish the main results, we additionally assume throughout this paper that the function $\Phi$ satisfies $\Phi'(0)>0$ and  $\Phi(\mathbb{U})$ is symmetric with respect to the real axis, in addition to the conditions already imposed. Let the series expansion of $\Phi$ be given by
\begin{equation}\label{Phi}
   \Phi(\zeta)=1+B_1\zeta+B_2\zeta^2+B_3\zeta^3+\cdots,\qquad B_1>0.
\end{equation}
    Since $\Phi(\mathbb{U})$ is symmetric  and $\Phi(0)=1$, it follows that all the coefficients $B_i$ are real.
    It is worth noting that if $f\in \mathcal{H}(\mathbb{B})$ and $D (f(z))^{-1} f(z) \in \mathcal{M}_\Phi$, then specific choices of $\Phi$  in (\ref{Mphi}) lead to different subclasses of holomorphic mappings. For instance, taking
    $$\Phi(\zeta) = \frac{1+\zeta}{1 -\zeta}, \;\; \Phi(\zeta) = \frac{1 + (1-2 \alpha)\zeta}{1-\zeta} \;\; \text{and} \;\;\Phi(\zeta) = \Big(\frac{1+\zeta}{1-\zeta}\Big)^\beta ,$$
     we obtain that  $f \in \mathcal{S}^*(\mathbb{B})$, $f \in  \mathcal{S}^*_\alpha(\mathbb{B})$ and $f \in  \mathcal{SS}^*_\beta(\mathbb{B})$, respectively. Here, the branch of power function $((1+\zeta)/(1-\zeta))^\beta$ is chosen so that $((1+\zeta)/(1-\zeta))^\beta=1$ at $\zeta=0$.

\section{Third-order Toeplitz determinant}
   Let $\mathcal{B}_0$ represent the class of all Schwarz functions $\omega$ on $\mathbb{U}$ satisfying $\omega(0)=0$ and $\vert\omega(\zeta)\vert <1$ for all $\zeta\in \mathbb{U}$. The following lemmas are essential for proving the main results.
\begin{lemma}\label{lmm1}\cite{ProSzy}
     If $\omega(z)= \sum_{n=1}^\infty c_n z^n \in \mathcal{B}_0$, then
     $$\vert c_1\vert \leq 1 \;\; \text{and}\;\;  \vert c_2 \vert \leq 1 - \vert c_1\vert^2 .$$
\end{lemma}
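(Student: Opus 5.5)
This is the classical Schwarz-lemma coefficient estimate, and the plan is to use the Schwarz lemma both directly and through a Möbius composition. For $|c_1|\le 1$, observe that $\omega(\zeta)/\zeta$ extends holomorphically to $\mathbb{U}$ with value $c_1$ at the origin. Since $|\omega(\zeta)|<1$, the maximum principle on circles $|\zeta|=r<1$ gives $|\omega(\zeta)/\zeta|\le 1/r$. Letting $r\to 1$ yields $|\omega(\zeta)/\zeta|\le 1$ throughout $\mathbb{U}$, and in particular $|c_1|\le 1$.

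For the second inequality, set $g(\zeta)=\omega(\zeta)/\zeta = c_1 + c_2\zeta + \cdots$. By the first step, $g$ maps $\mathbb{U}$ into the closed disk.

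If $|c_1|=1$, the maximum principle forces $g\equiv c_1$. Then $c_2=0$ and the inequality holds trivially with equality.

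Otherwise $|c_1|<1$. The open mapping theorem, or the maximum principle again, shows that $g$ maps $\mathbb{U}$ into the open disk $\mathbb{U}$. We then compose with the disk automorphism
$$h(\zeta)=\frac{g(\zeta)-c_1}{1-\overline{c_1}\,g(\zeta)},$$
which satisfies $h(0)=0$ and $h(\mathbb{U})\subset\mathbb{U}$. Applying the first part to $h$ gives $|h'(0)|\le 1$. A direct differentiation at $0$ gives
$$h'(0)=\frac{g'(0)}{1-|c_1|^2}=\frac{c_2}{1-|c_1|^2},$$
and hence $|c_2|\le 1-|c_1|^2$.

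The only delicate point is handling the degenerate case $|c_1|=1$ separately, so that the Möbius map is well defined. Everything else is routine.
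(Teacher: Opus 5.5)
Your proof is correct and complete. The Schwarz lemma gives $|\omega(\zeta)/\zeta|\le 1$. The maximum principle then handles the degenerate case $|c_1|=1$, since it forces $g\equiv c_1$ and hence $c_2=0$. For $|c_1|<1$, the composition $h=(g-c_1)/(1-\overline{c_1}\,g)$ belongs to $\mathcal{B}_0$ and has $h'(0)=c_2/(1-|c_1|^2)$, which gives $|c_2|\le 1-|c_1|^2$.

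The paper gives no argument for this lemma; it simply quotes it from Prokhorov and Szynal, so there is no competing proof to compare. What you wrote is the standard self-contained derivation, namely Schwarz--Pick applied to $\omega(\zeta)/\zeta$ at the origin. Including it would make the paper's later use of the bound independent of the reference, for instance the step $6|c_1|^2+3|c_2|^2\le 3(1+|c_1|^4)$ in the proof of Lemma~\ref{lm3}.
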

\begin{lemma}\cite{ProSzy}\label{lemma2}
    If $\omega(z) = \sum_{n=1}^\infty c_n z^n \in \mathcal{B}_0$, then
\begin{equation*}
\begin{aligned}
\vert c_3 + q_1 c_1 c_2 +q_2 c_1^3 \vert \leq
\left\{
\begin{array}{ll}
     1   & \text{if}\;(q_1, q_2)\in \Theta_1\cup\Theta_2\cup\{(2,1)\},\\ \\
      \vert q_2 \vert & \text{if}\;(q_1, q_2)\in\cup_{i=3}^7 \Theta_i, \\ \\
      \dfrac{2}{3}(\vert q_1\vert +1)\Big(\dfrac{\vert q_1\vert+1}{3(\vert q_1\vert + q_2 +1)} \Big)^{1/2} & \text{if}\;(q_1, q_2)\in\Theta_8\cup\Theta_9,
\end{array}
\right.
\end{aligned}
\end{equation*}
    where $(q_1, q_2)\in \mathbb{R}^2$ and
\begin{align*}
  \Theta_1 &= \Big\{ (q_1 , q_2) : \vert q_1 \vert \leq \frac{1}{2}, \vert q_2\vert \leq 1 \Big\},  \\
   \Theta_2 &= \Big\{ (q_1 , q_2) : \frac{1}{2} \leq \vert q_1\vert \leq 2,\; \frac{4}{27} (\vert q_1\vert + 1)^3 - (\vert q_1\vert + 1) \leq q_2 \leq 1 \Big\}, \\
   \Theta_3 &= \Big\{ (q_1 , q_2) : \vert q_1\vert \leq \frac{1}{2}, q_2 \leq -1 \Big\}, \;\; \Theta_4 = \Big\{ (q_1 , q_2) : \vert q_1\vert \geq \frac{1}{2}, q_2 \leq -\frac{2}{3} (\vert q_1\vert + 1) \Big\}, \\
     \Theta_5 &= \Big\{ (q_1, q_2) : \vert q_1 \vert \leq 2, \; q_2 \geq 1 \Big\}, \;\;\Theta_6 = \Big\{ (q_1, q_2) : 2 \leq \vert q_1 \vert \leq 4, \; q_2 \geq \frac{1}{12} (q_1^2 + 8) \Big\},\\
     \Theta_7 &= \Big\{ (q_1, q_2) : \vert q_1 \vert \geq 4, \; q_2 \geq \frac{2}{3} (\vert q_1\vert - 1) \Big\}, \\
     \Theta_8 &=\Big\{ (q_1, q_2): \frac{1}{2} \leq \vert q_1\vert \leq 2,\; -\frac{2}{3} (\vert q_1 \vert +1 ) \leq q_2 \leq \frac{4}{27}(\vert q_1 \vert+ 1)^3 - (\vert q_1 \vert +1) \Big\},\\
      \Theta_9 &=\Big\{(q_1, q_2): \vert q_1 \vert \geq 2,\; -\frac{2}{3} (\vert q_1 \vert + 1) \leq q_2 \leq \frac{2 \vert q_1 \vert (\vert q_1 \vert + 1)}{q_1^2 + 2 \vert q_1 \vert + 4}\Big\}.
\end{align*}
   The estimate is sharp for the function $\omega(\zeta)= \zeta^3$ when $(q_1, q_2)\in \Theta_1\cup\Theta_2\cup\{(2,1)\}$ and for $\omega(\zeta)= \zeta$ when $(q_1, q_2)\in\cup_{i=3}^7 \Theta_i$.
\end{lemma}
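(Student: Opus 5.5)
Since this lemma is quoted from Prokhorov and Szynal~\cite{ProSzy}, I would not reprove every region in full. I would reproduce the standard extremal argument based on the Schur parametrization of the first three coefficients of a Schwarz function.

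\textbf{Parametrization.} For $\omega\in\mathcal{B}_0$ the Schur algorithm gives
\begin{equation*}
c_1=a,\qquad c_2=(1-|a|^2)b,\qquad c_3=(1-|a|^2)\big((1-|b|^2)c-\bar a b^2\big),
\end{equation*}
with $|a|,|b|,|c|\le 1$. Conversely, every such triple is realized by some $\omega\in\mathcal{B}_0$, so the extremal problem becomes a problem over $\overline{\mathbb{U}}^3$. Lemma~\ref{lmm1} is the first two relations. Substituting and using the triangle inequality in $c$, with $|c|=1$ attainable, gives
\begin{equation*}
|c_3+q_1c_1c_2+q_2c_1^3|\le (1-|a|^2)(1-|b|^2)+\big|(1-|a|^2)(q_1ab-\bar a b^2)+q_2a^3\big|.
\end{equation*}
Equality is attained by a suitable choice of $\arg c$. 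Replacing $\omega(\zeta)$ by $e^{-i\theta}\omega(e^{i\theta}\zeta)$ leaves the modulus unchanged, so we may take $a=r\in[0,1]$. Put $b=se^{i\varphi}$ with $s\in[0,1]$. The task is then to maximize
\begin{equation*}
F(r,s,\varphi)=(1-r^2)(1-s^2)+\big|(1-r^2)(q_1 rse^{i\varphi}-rs^2e^{2i\varphi})+q_2r^3\big|
\end{equation*}
over $[0,1]^2\times[0,2\pi)$.

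\textbf{Elimination of the angle and boundary values.} Since $q_1,q_2$ are real, I would first optimize over $\varphi$. Because $q_2r^3$ is real, the modulus is a function of $\cos\varphi$ through a quadratic expression, so its maximum sits at $\cos\varphi=\pm1$ or at an interior critical value. The sign of $q_1$ is absorbed by $\varphi\mapsto\varphi+\pi$, which explains why only $|q_1|$ enters the conditions. Once $\varphi$ is fixed, $F$ is a function of two real variables. The boundary values settle the first two cases:
\begin{itemize}
\item at $r=0$ we get $F=1-s^2\le 1$, attained by $\omega(\zeta)=\zeta^3$;
\item at $r=1$ we get $F=|q_2|$, attained by $\omega(\zeta)=\zeta$.
\end{itemize}
Hence the bound is $\max\{1,|q_2|\}$ whenever no interior point beats the boundary. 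One then checks, by comparing $F$ with these boundary values using elementary inequalities in $r$ and $s$, that this happens exactly on $\Theta_1\cup\Theta_2\cup\{(2,1)\}$, where the bound is $1$, and on $\Theta_3,\dots,\Theta_7$, where the bound is $|q_2|$. The curves $q_2=\tfrac{4}{27}(|q_1|+1)^3-(|q_1|+1)$, $q_2=-\tfrac23(|q_1|+1)$, $q_2=\tfrac1{12}(q_1^2+8)$ and $q_2=\tfrac23(|q_1|-1)$ should arise as the loci where an interior critical point crosses the boundary, or where its value equals $1$ or $|q_2|$.

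\textbf{Interior maximum.} On $\Theta_8\cup\Theta_9$ the maximum is interior. Here I expect, for the correct sign choice, that the optimal $s$ is a linear function of $r$ (roughly $s$ proportional to $r$, with $q_1$ absorbed). This should reduce $F$ to a cubic polynomial in $r$ of the form $A r-Br^3$ with $A=\tfrac{|q_1|+1}{1}$-type and $B=|q_1|+q_2+1$. The maximum of $Ar-Br^3$ on $(0,1)$ is $\tfrac23A\sqrt{A/(3B)}$, which is precisely $\tfrac23(|q_1|+1)\big(\tfrac{|q_1|+1}{3(|q_1|+q_2+1)}\big)^{1/2}$.

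\textbf{Main obstacle.} I expect the hardest part to be justifying this reduction: proving that the optimal $\varphi$ and $s$ are the ones claimed, and matching the exact boundaries of $\Theta_8$ and $\Theta_9$. This requires verifying the Hessian or concavity conditions and treating the mixed-sign regime $q_2<0$ separately. I would handle it as Prokhorov and Szynal do, by a careful case split on the sign of $q_2$ and on $|q_1|\lessgtr 2$, rather than by any general principle.
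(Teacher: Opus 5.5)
The paper gives no proof of this lemma; it simply quotes Prokhorov--Szynal, so a citation is all the paper needs. Read as a proof, your sketch has a genuine gap. The sentence ``one then checks, by comparing $F$ with these boundary values using elementary inequalities'' is the entire content of the lemma. That step must show two things. First, on $\Theta_1,\dots,\Theta_7$ no point of $[0,1]^2\times[0,2\pi)$ exceeds $\max\{1,|q_2|\}$, including the faces $s=0$ and $s=1$ with $0<r<1$. Second, on $\Theta_8\cup\Theta_9$ nothing beats the extremal family. Moreover, the one place where you commit to a concrete mechanism is wrong. If $s$ is proportional to $r$, the term $(1-r^2)(1-s^2)$ survives, so $F\to 1$ as $r\to 0$, and you can never reach a cubic $Ar-Br^3$. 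There is also a slip in the normalization. The map $e^{-i\theta}\omega(e^{i\theta}\zeta)$ has coefficients $c_ne^{i(n-1)\theta}$, so it fixes $c_1$ and does not preserve the modulus of the functional. You want $\omega(e^{i\theta}\zeta)$, which multiplies the functional by $e^{3i\theta}$ and sends $a\mapsto e^{i\theta}a$.

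On $\Theta_8\cup\Theta_9$ the maximizer lies on the face $s=|b|=1$ with $b=-\mathrm{sgn}\,q_1$. In other words, $\omega$ is the degree-two Blaschke product quoted after the lemma (for $q_1>0$; its reflection $-\omega(-\zeta)$ for $q_1<0$). There $c_1=r$, $c_2=-\mathrm{sgn}(q_1)(1-r^2)$, $c_3=-r(1-r^2)$, and
\begin{equation*}
c_3+q_1c_1c_2+q_2c_1^3=-(1+|q_1|)\,r+(1+|q_1|+q_2)\,r^3 .
\end{equation*}
This accounts for the edges you left unexplained:
\begin{itemize}
\item The critical point $\rho=\sqrt{(|q_1|+1)/(3(|q_1|+q_2+1))}$ lies in $(0,1]$ iff $q_2\ge-\frac23(|q_1|+1)$. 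Below this line the cubic increases on $[0,1]$ and gives $|q_2|$ at $r=1$, as in $\Theta_4$.
\item The value $\frac23(|q_1|+1)\rho$ is at least $1$ iff $q_2\le\frac4{27}(|q_1|+1)^3-(|q_1|+1)$, which is the upper edge of $\Theta_8$.
\item At $(r,s)=(\rho,1)$, the choice $b=-\mathrm{sgn}\,q_1$ stays optimal in your quadratic in $\cos\varphi$ iff $q_2\le 2|q_1|(|q_1|+1)/(q_1^2+2|q_1|+4)$, which is the upper edge of $\Theta_9$.
\end{itemize}
These are only necessary conditions. You would still have to carry out the global comparison, which is exactly what Prokhorov and Szynal do. As written, your proposal ultimately rests on the same citation as the paper.
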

    For $(q_1, q_2)\in \Theta_8 \cup \Theta_9$, Cho et al.~\cite{ChoKwoLecSim} obtained the explicit form of the extremal function, given by
   $$\omega(\zeta) = \frac{\zeta(\rho - \zeta)}{1 - \rho \zeta},$$
    where $$\rho = \sqrt{\frac{q_1+1}{3 (q_1 + q_2 +1)}}.$$
   In what follows, $\Phi$ refers to the function defined in (\ref{Phi}), unless otherwise stated.

\begin{lemma}\label{lm3}
   Let $h$ be a holomorphic function on $\mathbb{U}$ such that $h(\mathbb{U}) \subset \Phi(\mathbb{U})$ and $h(0)=\Phi(0)$. If $(q_1, q_2)\in \cup_{i=3}^7 \Theta_i$,  then 
\begin{align*}
    \Big\vert     \frac{ h'(0) h'''(0)}{3}  - \frac{3 (h'(0))^2 h''(0)}{2}    -  \frac{3 (h''(0))^2}{4}+  6 (h'(0))^2  -2 (h'(0))^4 \Big\vert \leq 2 B_1^2 \left\vert 3-  q_2 \right\vert,
\end{align*}
   where
\begin{equation}\label{q1q2}
    q_1 = -  \frac{(3 B_1^2 + 2 B_2)}{2 B_1} \;\; \text{and}\;\; q_2 =  -\frac{(2 B_1^4+3 B_1^2 B_2-2 B_1 B_3+3 B_2^2)}{2 B_1^2}.
\end{equation}
\end{lemma}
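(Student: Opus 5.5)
The plan is to pass to Schwarz-function coefficients by subordination and reduce the left-hand side to a functional that Lemma~\ref{lemma2} and Lemma~\ref{lmm1} control. Since $h(\mathbb{U})\subset\Phi(\mathbb{U})$, $h(0)=\Phi(0)$ and $\Phi$ is biholomorphic, we have $h=\Phi\circ\omega$ for some $\omega(\zeta)=\sum_{n\ge1}c_n\zeta^n\in\mathcal{B}_0$. Comparing coefficients with (\ref{Phi}) gives
\begin{equation*}
h'(0)=B_1c_1,\qquad \frac{h''(0)}{2}=B_1c_2+B_2c_1^2,\qquad \frac{h'''(0)}{6}=B_1c_3+2B_2c_1c_2+B_3c_1^3 .
\end{equation*}

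Substituting these into the expression and collecting terms, the $c_1^2c_2$ coefficient is $-B_1(3B_1^2+2B_2)=2B_1^2q_1$. The $c_1^4$ coefficient is $2B_1B_3-3B_1^2B_2-3B_2^2-2B_1^4=2B_1^2q_2$, with $q_1,q_2$ exactly as in (\ref{q1q2}). Hence the quantity to be estimated equals
\begin{equation*}
E=2B_1^2\Big[c_1\big(c_3+q_1c_1c_2+q_2c_1^3\big)-\tfrac32c_2^2+3c_1^2\Big].
\end{equation*}
This explains the shape of the claimed bound. For the rotation $\omega(\zeta)=i\zeta$ we get $c_1=i$ and $c_2=c_3=0$, so $c_1^2=-1$, $c_1^4=1$, and $E=2B_1^2(q_2-3)$. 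The constant $2B_1^2\vert 3-q_2\vert$ is therefore attained by that map, and the remaining task is the upper bound.

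For the upper bound I write $x=\vert c_1\vert\in[0,1]$ and use $\vert c_2\vert\le 1-x^2$ from Lemma~\ref{lmm1}. Since $(q_1,q_2)\in\cup_{i=3}^7\Theta_i$, Lemma~\ref{lemma2} gives $\vert c_3+q_1c_1c_2+q_2c_1^3\vert\le\vert q_2\vert$. A plain triangle inequality then yields
\begin{equation*}
\vert E\vert/(2B_1^2)\le x\vert q_2\vert+\tfrac32(1-x^2)^2+3x^2 ,
\end{equation*}
whose maximum on $[0,1]$ is $3+\vert q_2\vert$. This coincides with $\vert 3-q_2\vert$ precisely when $q_2\le 0$, which covers $\Theta_3$ and $\Theta_4$.

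\textbf{Main obstacle.} The regions $\Theta_5,\Theta_6,\Theta_7$, where $q_2>0$, are the crux. There the stated bound needs a genuine phase cancellation between the term $q_2c_1^4$ inside $c_1(c_3+\cdots)$ and the term $3c_1^2$. My first attempt would be to write $c_1=xe^{i\theta}$ and not split off these two pieces by the triangle inequality. Instead I would keep $q_2c_1^4+3c_1^2=c_1^2(q_2c_1^2+3)$ together and bound the rest using the Carlson-type parametrization $c_2=(1-x^2)\zeta_1$ and $c_3=(1-x^2)(1-\vert\zeta_1\vert^2)\zeta_2-(1-x^2)\bar c_1\zeta_1^2$.

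Before investing in that analysis, I would test the single map $\omega(\zeta)=\zeta$, where $c_1=1$ and $c_2=c_3=0$. It gives $E=2B_1^2(q_2+3)$, which exceeds $2B_1^2\vert 3-q_2\vert$ whenever $q_2>0$. So in that regime the inequality as written cannot hold for every admissible $\Phi$. The argument above therefore establishes the statement exactly when $q_2\le 0$.
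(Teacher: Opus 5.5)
\textbf{There is a genuine gap: the $\Theta_5$--$\Theta_7$ "counterexample" is vacuous, and the statement is true as written.}

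Your reduction is correct and is exactly the paper's. It uses the same subordination $h=\Phi\circ\omega$, the same identity for $E$ with $q_1,q_2$ as in (\ref{q1q2}), and the same chain Lemma~\ref{lmm1} $+$ Lemma~\ref{lemma2} $+$ triangle inequality, giving $\vert E\vert\le 2B_1^2(3+\vert q_2\vert)$. The paper then writes this as $2B_1^2\vert 3-q_2\vert$ without comment. You are right that this step is legitimate only when $q_2\le 0$.

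The gap is in what you conclude from this. Your test map $h=\Phi$ refutes the lemma only if some admissible $\Phi$ (biholomorphic, $\RE\Phi>0$, real coefficients, $B_1>0$) actually has $(q_1,q_2)\in\Theta_5\cup\Theta_6\cup\Theta_7$. You never check this, and in fact none does. All three regions require $q_2\ge 1$, while univalence of $\Phi$ forces $q_2<1$. The step missing from your proof is precisely the exclusion of $\Theta_5,\Theta_6,\Theta_7$.

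Here is one way to supply it. Put $\omega_0=(\Phi-1)/(\Phi+1)$. It is a univalent self-map of $\mathbb{U}$ with $\omega_0(0)=0$ and $\omega_0'(0)=b:=B_1/2\in(0,1]$. Hence $s=\omega_0/b=\zeta+\alpha_2\zeta^2+\alpha_3\zeta^3+\cdots$ lies in $\mathcal{S}$, has real coefficients, and satisfies $\vert s\vert<1/b$. From $\Phi=1+2\omega_0+2\omega_0^2+2\omega_0^3+\cdots$ you get
\[
B_1=2b,\qquad B_2=2b(\alpha_2+b),\qquad B_3=2b(\alpha_3+2b\alpha_2+b^2).
\]
Substituting into (\ref{q1q2}) gives
\[
q_2=\alpha_3-\tfrac32\alpha_2^2-4b\alpha_2-\tfrac{15}{2}b^2=(\alpha_3-\alpha_2^2)-\tfrac12(\alpha_2+4b)^2+\tfrac12 b^2 .
\]
The function $1/s(1/\xi)=\xi-\alpha_2+(\alpha_2^2-\alpha_3)\xi^{-1}+\cdots$ is univalent in $\vert\xi\vert>1$ and omits the disk $\{\vert w\vert\le b\}$. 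The area theorem therefore gives $(\alpha_3-\alpha_2^2)^2\le 1-b^2$. Hence $q_2\le\sqrt{1-b^2}+b^2/2<1$, since $(1-b^2/2)^2=1-b^2+b^4/4>1-b^2$ for $b>0$.

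Consequently, under the hypothesis, $(q_1,q_2)\in\Theta_3\cup\Theta_4\subset\{q_2\le -1\}$. There $3+\vert q_2\vert=\vert 3-q_2\vert$, and your triangle-inequality estimate finishes the proof.

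The phenomenon you found is real, but it lives outside the hypothesis. For instance, $\Phi(\zeta)=(1+\zeta)/(1+0.6\zeta)$ is admissible with $(q_1,q_2)=(0,0.02)\in\Theta_1$, and there $h=\Phi$ does violate the inequality.
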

\begin{proof}
     By the hypothesis $h(0)=\Phi(0)$ and $h(\mathbb{U})\subset \Phi(\mathbb{U})$, we have $h\prec \Phi$. Therefore, there exists a Schwarz function $\omega(\zeta)=\sum_{n=1}^\infty c_n \zeta^n \in \mathcal{B}_0$ satisfying
    $$ h(\zeta) = \Phi(\omega(\zeta)) , \quad \zeta\in \mathbb{U}. $$
     A comparison of the coefficients of like powers of $\zeta$ in the expansions of $h$, $\Phi$ and $\omega$ yields
\begin{equation}\label{h1h2Lm3}
     h'(0)=  B_1 c_1,\;\;\frac{h''(0)}{2}= B_2 c_1^2+B_1 c_2 \;\; \text{and}   \;\; \frac{h'''(0)}{6}=  B_3 c_1^3 + 2 B_2 c_1 c_2 + B_1 c_3 .
\end{equation}
   A direct calculation using these expressions and the triangle inequality  gives
\begin{align*}
  \Big\vert     \frac{ h'(0) h'''(0)}{3}  - \frac{3 (h'(0))^2 h''(0)}{2}     - & \frac{3 (h''(0))^2}{4} +  6 (h'(0))^2 -2 (h'(0))^4 \Big\vert \\
                     &=   \Big\vert 6 B_1^2 c_1^2 - 3 B_1^2 c_2^2  + 2 B_1^2 c_1  \left( c_3 + c_1 c_2 q_1 + c_1^3 q_2 \right)\Big\vert \\
                     & \leq   6 B_1^2 \left\vert c_1 \right\vert^2 + 3 B_1^2 \left\vert c_2 \right\vert^2   + 2 B_1^2 \left\vert c_1 \right\vert  \left\vert c_3 + c_1 c_2 q_1 + c_1^3 q_2 \right\vert,
\end{align*}
   where $q_1$ and $q_2$ are given by (\ref{q1q2}).
    In view of Lemma~\ref{lmm1}, we get
\begin{align*}
     \Big\vert     \frac{ h'(0) h'''(0)}{3}  - \frac{3 (h'(0))^2 h''(0)}{2}     - & \frac{3 (h''(0))^2}{4} +  6 (h'(0))^2 -2 (h'(0))^4 \Big\vert \\
     & \leq  3 B_1^2 (1 + \left\vert c_1\right\vert^4)  + 2 B_1^2 \left\vert c_1 \right\vert  \left\vert c_3 + c_1 c_2 q_1 + c_1^3 q_2 \right\vert,
\end{align*}
   Since $(q_1, q_2) \in \cup_{i=3}^7 \Theta_i$, applying Lemma~\ref{lemma2} and  $\vert c_1\vert \leq 1$ from Lemma~\ref{lmm1}  to the above inequality, the required estimate follows.
\end{proof}

\begin{lemma}\label{lmA4}
 Let $h$ be a holomorphic function on $\mathbb{U}$ such that $h(\mathbb{U}) \subset \Phi(\mathbb{U})$ and $h(0)=\Phi(0)$. If $(q_3,q_4) \in \cup_{i=5}^7 \Theta_i $, then
    $$ \Bigg\vert \frac{(h'(0))^3}{2}+ \frac{3 h'(0) h''(0)}{4}  +\frac{h'''(0)}{6}  \Bigg\vert \leq \frac{1}{2} \left\vert B_1^3+3 B_2 B_1+2 B_3\right\vert,$$
    where
    $$q_3=\frac{2 B_2}{B_1}+\frac{3 B_1}{2} \;\;\text{and}\;\; q_4=\frac{B_1^3+3 B_1 B_2+2 B_3}{2 B_1}. $$
\end{lemma}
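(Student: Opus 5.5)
Following the proof of Lemma~\ref{lm3}, we use the subordination $h\prec\Phi$: there is a Schwarz function $\omega(\zeta)=\sum_{n=1}^\infty c_n\zeta^n\in\mathcal{B}_0$ with $h=\Phi\circ\omega$. The coefficient relations (\ref{h1h2Lm3}) then express $h'(0)$, $h''(0)/2$ and $h'''(0)/6$ in terms of $B_1,B_2,B_3$ and $c_1,c_2,c_3$. Substituting them into the functional turns it into a polynomial in $c_1,c_2,c_3$. The aim is to write it in the form $B_1\,(c_3+q_3c_1c_2+q_4c_1^3)$, so that Lemma~\ref{lemma2} applies directly.

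The computation is short. We have $(h'(0))^3/2 = B_1^3c_1^3/2$. Next, $\tfrac34 h'(0)h''(0)=\tfrac32 B_1c_1(B_2c_1^2+B_1c_2)=\tfrac32 B_1B_2c_1^3+\tfrac32B_1^2c_1c_2$. Finally, $h'''(0)/6=B_3c_1^3+2B_2c_1c_2+B_1c_3$. Collecting terms gives
\begin{align*}
\frac{(h'(0))^3}{2}+\frac{3h'(0)h''(0)}{4}+\frac{h'''(0)}{6}
&= B_1c_3+\Big(2B_2+\frac{3}{2}B_1^2\Big)c_1c_2+\Big(\frac{B_1^3}{2}+\frac{3}{2}B_1B_2+B_3\Big)c_1^3\\
&= B_1\big(c_3+q_3c_1c_2+q_4c_1^3\big),
\end{align*}
with $q_3$ and $q_4$ exactly as in the statement. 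Since $B_1>0$, the modulus of the functional equals $B_1\,\vert c_3+q_3c_1c_2+q_4c_1^3\vert$.

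By hypothesis $(q_3,q_4)\in\cup_{i=5}^7\Theta_i\subset\cup_{i=3}^7\Theta_i$, so Lemma~\ref{lemma2} gives $\vert c_3+q_3c_1c_2+q_4c_1^3\vert\le\vert q_4\vert$. Hence the functional is bounded by $B_1\vert q_4\vert=\tfrac12\vert B_1^3+3B_1B_2+2B_3\vert$, which is the claimed estimate. Sharpness follows from taking $\omega(\zeta)=\zeta$, the extremal in Lemma~\ref{lemma2} for these regions, that is, $h=\Phi$.

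The only real obstacle is the bookkeeping in the coefficient substitution: one has to check that no stray terms remain, so that the functional is a pure multiple of $c_3+q_3c_1c_2+q_4c_1^3$. Unlike Lemma~\ref{lm3}, no triangle-inequality splitting or use of Lemma~\ref{lmm1} is needed.
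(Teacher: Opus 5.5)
Your proposal is correct and follows the paper's proof: you use (\ref{h1h2Lm3}) to rewrite the functional as $B_1(c_3+q_3c_1c_2+q_4c_1^3)$, then apply the $\vert q_2\vert$ case of Lemma~\ref{lemma2}. Your explicit coefficient bookkeeping, which the paper leaves implicit, checks out, and the sharpness remark via $\omega(\zeta)=\zeta$ is a correct addition not required by the statement.
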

\begin{proof}
   From~(\ref{h1h2Lm3}), we obtain
\begin{align*}
     \Big\vert   \frac{h'''(0)}{6}+\frac{(h'(0))^3}{2}+ \frac{3 h'(0) h''(0)}{4} \Big\vert =  B_1\left\vert c_3 + q_3 c_1 c_2  +  q_4 c_1^3   \right\vert.
\end{align*}
   Using the bound from Lemma~\ref{lemma2}, we deduce the stated result.
\end{proof}
    The following result provides the sharp bound of $\vert T_{3,2}(F)\vert$ in terms of the corresponding homogeneous expansion of a biholomorphic mapping $F$. For $F\in \mathcal{H}(\mathbb{B})$, we use the following notation throughout this section:
\begin{equation}\label{A2A3A4F}
   A_2=\frac{ l_z (D^2 F(0) (z^2))}{2! \vert\vert z \vert\vert^2},  \;\; A_3= \frac{ l_z (D^3 F(0) (z^3))}{3! \vert\vert z \vert\vert^3}\;\; \text{and}  \;\; A_4= \frac{ l_z (D^4 F(0) (z^4))}{4! \vert\vert z \vert\vert^4}.
\end{equation}
\begin{theorem}\label{thmB}
    Let $f\in \mathcal{H}(\mathbb{B},\mathbb{C})$ with $f(0)=1$ and define $F(z)= z f(z)$. If $(D F(z))^{-1} F(z) \in \mathcal{M}_\Phi$ such that $(q_1, q_2)\in \cup_{i=3}^7 \Theta_i$ and $(q_3,q_4) \in \cup_{i=5}^7 \Theta_i $,
   then
\begin{equation*}
\begin{aligned}
    \vert A_2^3-2 A_2 A_3^2- A_2 A_4^2 + 2 A_3^2 A_4 \vert \leq  \frac{( B_1^3  + 3 B_1 B_2 + 2 B_3 + 6 B_1 ) \vert  2 B_1^4+3 B_1^2 B_2+6 B_1^2 -2 B_1 B_3 +3 B_2^2  \vert }{36} ,
\end{aligned}
\end{equation*}
   where
\begin{align*}
       q_1 &= -\frac{(3 B_1^2 + 2 B_2)}{2 B_1}, \quad  q_2 =  -\frac{(2 B_1^4+3 B_1^2 B_2-2 B_1 B_3+3 B_2^2)}{2 B_1^2},\\
        q_3 &= \frac{3 B_1^2 + 4 B_2}{2 B_1}, \quad \quad \;  q_4=\frac{B_1^3+3 B_1 B_2+2 B_3}{2 B_1}
\end{align*}
    and $A_2$, $A_3$, $A_4$ are given by (\ref{A2A3A4F}). Moreover, the estimate is sharp.
\end{theorem}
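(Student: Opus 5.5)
The plan is to reduce the statement to the one-variable situation of Lemmas~\ref{lm3} and~\ref{lmA4}. Then I factor the Toeplitz expression and bound each factor with one of these lemmas.

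\emph{Reduction to one variable.} Fix $z\in\mathbb{B}\setminus\{0\}$, put $u=z/\|z\|$, take $l_u\in T(u)$ and set $g(\zeta)=f(\zeta u)$ for $\zeta\in\mathbb{U}$. Since $F(z)=zf(z)$, we have
\[
DF(z)w=f(z)w+z\,Df(z)w,
\]
so that
\[
(DF(z))^{-1}F(z)=\frac{z f(z)}{f(z)+Df(z)z}.
\]
Let $p=(DF)^{-1}F\in\mathcal{M}_\Phi$. Then
\[
\frac{\zeta}{l_u(p(\zeta u))}=1+\frac{\zeta g'(\zeta)}{g(\zeta)}=:h(\zeta),
\]
with $h(0)=1=\Phi(0)$. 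Since $\|\zeta u\|/l_{\zeta u}(p(\zeta u))\in\Phi(\mathbb{U})$ (using $T(\zeta u)\ni (|\zeta|/\zeta)\, l_u$), we get $h(\mathbb{U})\subset\Phi(\mathbb{U})$, so $h$ satisfies the hypotheses of both lemmas.

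Write $G(\zeta)=\zeta g(\zeta)=\zeta+a_2\zeta^2+a_3\zeta^3+a_4\zeta^4+\cdots$. Since $D^kF(0)(z^k)/k!=z\,D^{k-1}f(0)(z^{k-1})/(k-1)!$, we have $A_k=a_k$ for $k=2,3,4$. Moreover $\zeta G'(\zeta)/G(\zeta)=h(\zeta)$. Comparing coefficients gives
\[
A_2=h'(0),\qquad A_3=\tfrac12\big(\tfrac{h''(0)}{2}+(h'(0))^2\big),
\]
and
\[
A_4=\tfrac13\Big(\frac{h'''(0)}{6}+\frac{3h'(0)h''(0)}{4}+\frac{(h'(0))^3}{2}\Big).
\]
I will double-check these against the expression in Lemma~\ref{lmA4}.

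\emph{Factorization and bounds.} A direct expansion shows
\[
A_2^3-2A_2A_3^2-A_2A_4^2+2A_3^2A_4=(A_2-A_4)\,(A_2^2+A_2A_4-2A_3^2).
\]
For the first factor I use $|A_2-A_4|\le |A_2|+|A_4|$. Here $|A_2|=B_1|c_1|\le B_1$ by Lemma~\ref{lmm1}. The hypothesis $(q_3,q_4)\in\cup_{i=5}^7\Theta_i$ and Lemma~\ref{lmA4} give
\[
|A_4|\le \tfrac16\,|B_1^3+3B_1B_2+2B_3|.
\]
Together these give $(B_1^3+3B_1B_2+2B_3+6B_1)/6$. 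This presupposes $B_1^3+3B_1B_2+2B_3\ge 0$, which holds because $q_4\ge 1>0$ on $\Theta_5\cup\Theta_6\cup\Theta_7$.

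For the second factor, substituting the formulas above, I expect
\[
6(A_2^2+A_2A_4-2A_3^2)=\frac{h'(0)h'''(0)}{3}-\frac{3(h'(0))^2h''(0)}{2}-\frac{3(h''(0))^2}{4}+6(h'(0))^2-2(h'(0))^4,
\]
up to sign. I will verify this identity by expanding in $h'(0),h''(0),h'''(0)$; if the constants differ, I would redo the factor split accordingly. Given the identity, Lemma~\ref{lm3} yields the bound $2B_1^2|3-q_2|/6$. Using the formula for $q_2$, this equals
\[
\frac{|2B_1^4+3B_1^2B_2+6B_1^2-2B_1B_3+3B_2^2|}{6}.
\]
Multiplying the two bounds gives the stated constant $1/36$.

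\emph{Sharpness.} Take a unit vector $u$, $l_u\in T(u)$, and define
\[
F(z)=z\exp\int_0^{l_u(z)}\frac{\Phi(t)-1}{t}\,dt.
\]
For this map, $h=\Phi$ along $z=\zeta u$, which corresponds to $\omega(\zeta)=\zeta$, that is, $c_1=1$ and $c_2=c_3=0$. I then check that $(DF)^{-1}F\in\mathcal{M}_\Phi$, in the standard way as in \cite{GraHamKoh}. Evaluating at $z=ru$ gives $A_2=B_1$ and $A_4=(B_1^3+3B_1B_2+2B_3)/6$, so $A_2-A_4$ has modulus equal to the first bound.

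The main obstacle is confirming that the second factor also attains its bound. This needs the triangle-inequality step in Lemma~\ref{lm3} to be an equality at $c_1=1$, $c_2=c_3=0$. There the expression becomes $6B_1^2+2B_1^2q_2$, and its modulus should equal $2B_1^2|3-q_2|$. Note that $6+2q_2=2(3+q_2)$, while the stated bound reads $2(3-q_2)$, i.e.\ $6-2q_2$. So I would recheck the sign convention of $q_2$ against the computed identity before finalizing, and if necessary use a rotation such as $\omega(\zeta)=i\zeta$ to align the signs.
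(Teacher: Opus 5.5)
Your upper bound follows the paper's argument step for step: the same function $h(\zeta)=1+\zeta g'(\zeta)/g(\zeta)$, the factorization $(A_2-A_4)(A_2^2+A_2A_4-2A_3^2)$, $|A_2|\le B_1$, Lemma~\ref{lmA4} for $A_4$, and Lemma~\ref{lm3} for the second factor. The identity you planned to check holds exactly, with no sign change. Your remark that $B_1^3+3B_1B_2+2B_3=2B_1q_4>0$ settles a point the paper leaves implicit.

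\textbf{The gap is in the sharpness step.} Your mapping is built from $\Phi(t)$, so $c_1=1$ and $c_2=c_3=0$. Then $A_2=B_1$ and $A_4=\frac16(B_1^3+3B_1B_2+2B_3)$ are both positive, so $|A_2-A_4|=|B_1-A_4|<B_1+A_4$. Your claim that $A_2-A_4$ attains the first bound is therefore false. The second factor fails too: $6(A_2^2+A_2A_4-2A_3^2)=2B_1^2(3+q_2)$, as you noticed. Whenever $q_2<0$ (e.g.\ on $\Theta_3\cup\Theta_4$), this is strictly smaller in modulus than $2B_1^2(3-q_2)$. So your map is extremal for neither factor. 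For $\Phi(\zeta)=(1+\zeta)/(1-\zeta)$ it is $z/(1-l_u(z))^2$, with $(A_2,A_3,A_4)=(2,3,4)$ and $|T_{3,2}|=12$, not $84$.

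\textbf{The fix.} The rotation you list only as a contingency is required, and for both factors, not just the second.
\begin{itemize}
\item With $c_2=c_3=0$ and $|c_1|=1$, you have $A_2-A_4=c_1\bigl(B_1-\frac16(B_1^3+3B_1B_2+2B_3)c_1^2\bigr)$.
\item In Lemma~\ref{lm3} the expression reduces to $6B_1^2c_1^2+2B_1^2q_2c_1^4$ with $q_2<0$.
\item Equality in both triangle inequalities therefore forces $c_1^2=-1$.
\end{itemize}
This is the paper's choice: $F(z)=z\exp\int_0^{l_u(z)}(\Phi(it)-1)/t\,dt$. At $z=ru$ it gives $A_2=iB_1$, $A_3=-\frac12(B_1^2+B_2)$ and $A_4=-\frac{i}{6}(B_1^3+3B_1B_2+2B_3)$. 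Hence $A_2-A_4=\frac{i}{6}(B_1^3+3B_1B_2+2B_3+6B_1)$ and $A_2^2+A_2A_4-2A_3^2=-\frac16(2B_1^4+3B_1^2B_2+6B_1^2-2B_1B_3+3B_2^2)$. Both factors attain their bounds at once, and the product is exactly the stated constant. In the Koebe case $(A_2,A_3,A_4)=(2i,-3,-4i)$ gives $84$. No change in the sign convention of $q_2$ is needed.
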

\begin{proof}
     Let $z_0 = \frac{z}{\|z \|}$ for a fixed $z\in X\setminus \{ 0 \}$.  Consider the function $ h : \mathbb{U} \rightarrow \mathbb{C}$ given by
\begin{equation*}
    h(\zeta) = \left\{ \begin{array}{ll}
     \dfrac{\zeta}{ l_z ((D F(\zeta z_0))^{-1} F( \zeta z_0) )}, & \zeta \neq 0, \\ \\
    1, & \zeta =0.
    \end{array}
    \right.
\end{equation*}
   Clearly, $h \in \mathcal{H}(\mathbb{U})$ and $h(0)=1  = \Phi(0)$. Furthermore, since $(D F(z))^{-1} F(z) \in \mathcal{M}_\Phi$, it follows
   that
\begin{align*}
   h(\zeta) = \frac{\zeta}{l_z ((D F(\zeta z_0))^{-1} F( \zeta z_0) ) } = &\frac{\zeta}{l_{z_0} ((D F(\zeta z_0))^{-1} F( \zeta z_0) ) } \\
             =& \frac{\| \zeta z_0 \| }{l_{ \zeta z_0} ((D F(\zeta z_0))^{-1} F( \zeta z_0) ) } \in \Phi(\mathbb{U}), \quad  \zeta \in \mathbb{U}.
\end{align*}
   Consequently, this implies that $h\prec \Phi$. On the other hand, following the approach adopted in~\cite[Theorem 7.1.14]{GraKoh} (also see \cite[Theorem 3.2]{XuLiuLiu}), we obtain
    $$ (D F(z))^{-1} = \frac{1}{f(z)} \bigg( I - \frac{\frac{z D f(z)}{f(z)}}{1 + \frac{D f(z) z}{f(z)}} \bigg),  \quad z\in \mathbb{B}. $$
    In view of the above, we get
    $$ (D F(z))^{-1} F(z) = \frac{z f(z) }{f(z) + D f(z) z} ,  $$
    which immediately yields
\begin{equation}\label{newe}
   \frac{\| z\|}{l_z ((D F(z))^{-1} F(z))} = 1 + \frac{D f(z) z}{f(z)} .
\end{equation}
   By virtue of (\ref{newe}), we deduce that
\begin{equation}\label{accr}
    h(\zeta) = \frac{\| \zeta z_0 \| }{l_{ \zeta z_0} ((D F(\zeta z_0))^{-1} F( \zeta z_0) ) }  =  1 + \frac{D f(\zeta z_0)\zeta z_0}{f(\zeta z_0)}.
\end{equation}
   The Taylor series expansions of $h(\zeta)$ and $f(\zeta z_0)$ yield
\begin{align*}
   \bigg(1 + & h'(0) \zeta  + \frac{h''(0)}{2} \zeta^2 + \cdots \bigg)\bigg( 1 + Df(0)(z_0) \zeta + \frac{ D^2 f(0)(z_{0}^2)}{2} \zeta^2 + \cdots \bigg)  \\
   & =\bigg( 1 + Df(0)(z_0) \zeta + \frac{ D^2 f(0)(z_{0}^2)}{2} \zeta^2 + \cdots \bigg)+ \bigg( Df(0)(z_0) \zeta +  D^2 f(0)(z_{0}^2)\zeta^2 + \cdots \bigg).
\end{align*}
  Comparing the homogeneous terms, we obtain
\begin{equation*}\label{eqhf}
\begin{aligned}
    h'(0) &= D f(0)(z_0),\;\; \dfrac{h''(0)}{2} =  D^2 f(0)(z_0^2) - (D f(0)(z_0))^2,\\
    \dfrac{h'''(0)}{6} &= ( Df(0)(z_0))^3 - \dfrac{3}{2} D f(0)(z_0) D^2 f(0) (z_0^2) + \dfrac{D^3 f(0) (z_0^3)}{2}.
\end{aligned}
\end{equation*}
   That is
\begin{equation}\label{eqhf2}
\begin{aligned}
\left.
\begin{array}{ll}
     h'(0) \|z\|&= D f(0)(z), \;\;    \dfrac{h''(0)}{2}\| z\|^2 =  D^2 f(0)(z^2) - (D f(0)(z))^2, \\ \\
    \dfrac{h'''(0)}{6} \|z\|^3 &=  ( Df(0)(z))^3 -\dfrac{3}{2} D f(0)(z) D^2 f(0) (z^2) + \dfrac{D^3 f(0) (z^3)}{2}.
\end{array}
\right\}
\end{aligned}
\end{equation}
     Since $F(z) = z f(z)$, it follows that
\begin{align*}
     \frac{ D^2 F(0) (z^2)}{2! } =  D f(0)(z)  z,  \; \;    \frac{ D^3 F(0) (z^3)}{3! } =  \frac{ D^2 f(0) (z^2)}{2! } z \;\;\text{and}\;\;
      \frac{ D^4 F(0) (z^4)}{4! } =  \frac{ D^3 f(0) (z^3)}{3! } z,
\end{align*}
   which leads to
    $$ \frac{l_z( D^2 F(0) (z^2))}{2! } =  D f(0)(z)  \|z\|, \quad   \frac{l_z (D^3 F(0) (z^3))}{3! } =  \frac{ D^2 f(0) (z^2)}{2! } \|z\| $$
   and
   $$          \frac{l_z( D^4 F(0) (z^4))}{4! } =  \frac{ D^3 f(0) (z^3)}{3! } \|z\|,$$
   respectively. Using (\ref{eqhf2}), we deduce that
\begin{equation}\label{A4Expre}
\left.
\begin{aligned}
     \frac{l_z( D^2 F(0) (z^2))}{2! \|z\|^2} &= h'(0), \\
     \frac{l_z (D^3 F(0) (z^3))}{3! \|z\|^3} &= \frac{1}{2}\bigg(\frac{h''(0)}{2}+ (h'(0))^2 \bigg),\\
      \frac{l_z( D^4 F(0) (z^4))}{4! \|z\|^4} &=  \frac{1}{3} \bigg( \frac{h'''(0)}{6}+\frac{(h'(0))^3}{2}+ \frac{3 h'(0) h''(0)}{4} \bigg).
\end{aligned}
\right\}
\end{equation}
  From~(\ref{A4Expre}) together with~(\ref{A2A3A4F}), we get
\begin{align*}
     \Big\vert A_2^2 -2 A_3^2 + A_2 A_4 \Big\vert &= \frac{1}{6} \Big\vert  6 (h'(0))^2- \frac{3 (h'(0))^2 h''(0)}{2}  +   \frac{ h'(0) h'''(0)}{3} -  \frac{3 (h''(0))^2}{4}  -2 (h'(0))^4 \Big\vert.
\end{align*}
    By the hypothesis $(q_1, q_2)\in \cup_{i=3}^7 \Theta_i$, applying Lemma~\ref{lm3} to the above equality yields
\begin{equation}\label{A2A3A4}
     \Big\vert A_2^2 -2 A_3^2 + A_2 A_4 \Big\vert  \leq \frac{1}{6} \left\vert 2 B_1^4+3 B_1^2 B_2+6 B_1^2-2 B_1 B_3+3 B_2^2\right\vert.
\end{equation}
  Further, by~(\ref{A4Expre}), we have
  $$ \bigg\vert \frac{l_z( D^4 F(0) (z^4))}{4! \|z\|^4} \bigg\vert = \bigg\vert  \frac{1}{3} \bigg( \frac{h'''(0)}{6}+\frac{(h'(0))^3}{2}+ \frac{3 h'(0) h''(0)}{4} \bigg) \bigg\vert. $$
 Combining the hypothesis $(q_3, q_4)\in \cup_{i=5}^7 \Theta_i$ with Lemma~\ref{lmA4}, we obtain
\begin{equation}\label{BdA4}
   \vert A_4\vert  =  \bigg\vert \frac{l_z( D^4 F(0) (z^4))}{4! \|z\|^4} \bigg\vert \leq   \frac{1}{6}  \left\vert B_1^3+3 B_1  B_2+2 B_3\right\vert .
\end{equation}
  Since $h\prec \Phi$, we have $\vert h'(0)\vert \leq B_1$. Therefore, by~(\ref{A4Expre}), it follows that
\begin{equation}\label{BdA2}
  \vert A_2\vert  =   \bigg\vert \frac{l_z( D^2 F(0) (z^2))}{2! \|z\|^2} \bigg\vert \leq B_1.
\end{equation}
  Employing the bounds in~(\ref{A2A3A4}), (\ref{BdA4}) and~(\ref{BdA2}) together with the triangle inequality, we derive
\begin{align*}
   \left\vert T_{3,2}(F) \right\vert &= \left\vert  A_2^3-2 A_2 A_3^2- A_2 A_4^2 + 2 A_3^2 A_4 \right\vert \\
             &\leq  \left (\vert A_2\vert + \vert A_4\vert \right) \left\vert A_2^2 -2 A_3^2 + A_2 A_4 \right\vert\\
        &\leq \frac{( B_1^3 + 3 B_1 B_2 + 2 B_3 + 6 B_1 ) \left\vert ( 2 B_1^4+3 B_1^2 B_2+6 B_1^2-2 B_1 B_3+3 B_2^2 ) \right\vert}{36}.
\end{align*}

    In order to establish the sharpness of the estimate, let us consider the mapping $F$ defined by
\begin{equation}\label{extB}
    F(z) = z \exp \int_0^{l_u(z)} \frac{( \Phi(i t)-1) }{t}dt, \quad z\in \mathbb{B}, \quad \vert\vert u \vert\vert=1,
\end{equation}
 where $\Phi(\zeta)$ is the same as that given in~(\ref{Phi}).
We deduce that $(D F(z))^{-1}F(z)\in \mathcal{M}_\Phi$ and a straightforward computation shows that
  $$  \frac{D^2  F(0) (z^2)}{2!}= i B_1 l_u(z) z ,\quad  \frac{D^3  F(0) (z^3)}{3!} = -\frac{1}{2} \left( B_1^2 +  B_2  \right) (l_u (z))^2 z$$
     \text{and}
\begin{align*}
     \frac{D^4  F(0) (z^4)}{4!} & =-\frac{i}{6} \left( B_1^3 + 3 B_1 B_2  + 2 B_3 \right)(l_u(z))^3 z,
\end{align*}
   which immediately provide
  $$  \frac{l_z(D^2  F(0) (z^2))}{2!}= i B_1 l_u(z) \|z\|, \;\;\; \frac{l_z (D^3  F(0) ( z^3 ))}{3!} =  -\frac{1}{2} \left( B_1^2 +  B_2  \right) (l_u (z))^2 \| z \|,$$
   and
\begin{align*}
     \frac{l_z (D^4  F(0) (z^4))}{4!} & =-\frac{i}{6} \bigg( B_1^3 + 3 B_1 B_2  + 2 B_3 \bigg)(l_u(z))^3 \|z\|,
\end{align*}
   respectively. Setting $z = r u$ $(0< r <1)$, we obtain
\begin{equation}\label{cftBToep}
\left.
\begin{aligned}
     \frac{l_z(D^2  F(0) (z^2))}{2! \|z\|^2}&=  i B_1,\\
     \frac{l_z (D^3  F(0) ( z^3) ) }{3! \| z \|^3}  &=   -\frac{1}{2} \left( B_1^2 +  B_2  \right), \\
      \frac{l_z (D^4  F(0) (z^4))}{4! \| z \|^4} & =-\frac{i}{6} \left( B_1^3 + 3 B_1 B_2  + 2 B_3 \right).
\end{aligned}
\right\}
\end{equation}
   Consequently, in view of (\ref{A2A3A4F}) and (\ref{cftBToep}), for the mapping $F$ defined in (\ref{extB}), it follows that
\begin{equation*}
\begin{aligned}
  \vert A_2^3-2A_2A_3^2-A_2A_4^2+2A_3^2A_4\vert =\frac{ (B_1^3+3B_1B_2+2B_3+6B_1) \vert 2B_1^4+3B_1^2B_2+6B_1^2-2B_1B_3+3B_2^2 \vert}{36},
\end{aligned}
\end{equation*}
  which confirms the sharpness of the bound.
\end{proof}
\section{Special cases}
   If we replace $\Phi(\zeta)$  by $(1+\zeta)/(1 -\zeta)$, $(1 + (1-2 \alpha)\zeta)/(1-\zeta)$ and $((1+\zeta)/(1-\zeta))^\beta$
    in Theorem~\ref{thmB}, we obtain the following bounds for the corresponding subclasses of $\mathcal{S}(\mathbb{B})$, respectively.
\begin{corollary}
    Let $f: \mathbb{B} \rightarrow \mathbb{C}$ with $f(0)=1$ and $F(z)= z f(z) \in \mathcal{S}^*(\mathbb{B})$. Then
\begin{equation*}
\begin{aligned}
    \Big\vert A_2^3-2 A_2 A_3^2- A_2 A_4^2 + 2 A_3^2 A_4 \Big\vert &\leq  84,
\end{aligned}
\end{equation*}
   where
    $A_2$, $A_3$, $A_4$ are given by (\ref{A2A3A4F}). The  estimate is sharp.
\end{corollary}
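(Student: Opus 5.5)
The plan is to obtain the corollary as the special case $\Phi(\zeta)=(1+\zeta)/(1-\zeta)$ of Theorem~\ref{thmB}. As noted at the end of the introduction, for this choice of $\Phi$ the condition $(DF(z))^{-1}F(z)\in\mathcal{M}_\Phi$ is exactly the statement $F\in\mathcal{S}^*(\mathbb{B})$. This $\Phi$ is biholomorphic, satisfies $\Phi(0)=1$ and $\RE\Phi>0$, maps $\mathbb{U}$ onto a half-plane symmetric about the real axis, and has $\Phi'(0)=2>0$. So all standing assumptions on $\Phi$ hold.

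First I read off the coefficients. Since $\Phi(\zeta)=1+2\zeta+2\zeta^2+2\zeta^3+\cdots$, we have $B_1=B_2=B_3=2$. Substituting these into the formulas of Theorem~\ref{thmB} gives
\begin{align*}
q_1&=-\frac{12+4}{4}=-4, & q_2&=-\frac{32+24-8+12}{8}=-\frac{15}{2},\\
q_3&=\frac{12+8}{4}=5, & q_4&=\frac{8+12+4}{4}=6.
\end{align*}

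Next comes the only genuinely non-automatic step: checking the parameter hypotheses of Theorem~\ref{thmB}. For $(q_1,q_2)$ we have $|q_1|=4\ge \tfrac12$ and $q_2=-\tfrac{15}{2}\le -\tfrac23(|q_1|+1)=-\tfrac{10}{3}$, so $(q_1,q_2)\in\Theta_4\subset\cup_{i=3}^7\Theta_i$. For $(q_3,q_4)$ we have $|q_3|=5\ge 4$ and $q_4=6\ge \tfrac23(|q_3|-1)=\tfrac83$, so $(q_3,q_4)\in\Theta_7\subset\cup_{i=5}^7\Theta_i$.

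Finally I evaluate the bound of Theorem~\ref{thmB}. The first factor is
\[
B_1^3+3B_1B_2+2B_3+6B_1=8+12+4+12=36.
\]
The second factor is
\[
\left|2B_1^4+3B_1^2B_2+6B_1^2-2B_1B_3+3B_2^2\right|=32+24+24-8+12=84.
\]
Hence the bound equals $36\cdot 84/36=84$.

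Sharpness is inherited from the extremal mapping (\ref{extB}) of Theorem~\ref{thmB} with this $\Phi$. Explicitly, this is $F(z)=z\exp\int_0^{l_u(z)}\frac{2i}{1-it}\,dt=z/(1-i\,l_u(z))^2$, which lies in $\mathcal{S}^*(\mathbb{B})$. At $z=ru$ it gives $A_2=2i$, $A_3=-4$ and $A_4=-6i$. A direct check then yields $|T_{3,2}|=|-8i+64i+72i-192i|=84$.
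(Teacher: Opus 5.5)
Your derivation of the bound follows the paper's route exactly: specialize Theorem~\ref{thmB} to $\Phi(\zeta)=(1+\zeta)/(1-\zeta)$. Your values $B_1=B_2=B_3=2$, $(q_1,q_2)=(-4,-15/2)\in\Theta_4$, $(q_3,q_4)=(5,6)\in\Theta_7$, and the final value $36\cdot 84/36=84$ are all correct. The paper also obtains sharpness from the extremal mapping (\ref{extB}), and your closed form $F(z)=z/(1-i\,l_u(z))^2$ is right.

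The explicit sharpness check you add, however, is wrong.
\begin{itemize}
\item \emph{The coefficients.} Expanding $(1-iw)^{-2}=1+2iw-3w^2-4iw^3+\cdots$, or substituting $B_1=B_2=B_3=2$ into (\ref{cftBToep}), gives $A_2=2i$, $A_3=-\tfrac12(B_1^2+B_2)=-3$ and $A_4=-\tfrac{i}{6}(B_1^3+3B_1B_2+2B_3)=-4i$. Your values $A_3=-4$ and $A_4=-6i$ are incorrect.
\item \emph{The arithmetic.} Your displayed sum does not have modulus $84$, since $-8+64+72-192=-64$. The second term also has the wrong sign: with your values, $-2A_2A_3^2=-64i$, and the total would be $-192i$.
\end{itemize}
With the correct coefficients,
\[
A_2^3-2A_2A_3^2-A_2A_4^2+2A_3^2A_4=-8i-36i+32i-72i=-84i.
\]
Equivalently, $(A_2-A_4)(A_2^2-2A_3^2+A_2A_4)=6i\cdot(-14)$, so equality $84$ is attained. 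Replace your last two sentences with this computation, or simply rely on the general equality already established for (\ref{extB}) in Theorem~\ref{thmB}.
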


\begin{corollary}
    Let $f: \mathbb{B} \rightarrow \mathbb{C}$ with $f(0)=1$ and $F(z)= z f(z) \in \mathcal{S}^*_\alpha(\mathbb{B})$. Then
  $$   \Big\vert A_2^3-2 A_2 A_3^2- A_2 A_4^2 + 2 A_3^2 A_4 \Big\vert \leq  \frac{4}{9} (1-\alpha )^3 (2 \alpha ^2-7 \alpha +9 ) (8 \alpha ^2-22 \alpha +21 )  $$
  for  $\alpha \in [0,\alpha_0]$,  where $\alpha_0 = (27-\sqrt{129})/24\approx 0.651758$ is a root of
\begin{equation*}
      24 \alpha ^2 - 54 \alpha +25=0
\end{equation*}
   and  $A_2$, $A_3$, $A_4$ are given by (\ref{A2A3A4F}).   The estimate is sharp.
\end{corollary}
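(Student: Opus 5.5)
The plan is to obtain the corollary as a direct specialization of Theorem~\ref{thmB} with $\Phi(\zeta)=(1+(1-2\alpha)\zeta)/(1-\zeta)$. As noted in the introduction, the condition $(DF(z))^{-1}F(z)\in\mathcal{M}_\Phi$ is then exactly $F\in\mathcal{S}^*_\alpha(\mathbb{B})$. This $\Phi$ is biholomorphic, satisfies $\Phi(0)=1$ and $\RE\Phi>0$, has $\Phi'(0)>0$, and its image is symmetric about the real axis. Expanding,
$$\Phi(\zeta)=1+2(1-\alpha)(\zeta+\zeta^2+\zeta^3+\cdots),$$
so $B_1=B_2=B_3=b$ with $b:=2(1-\alpha)\in(0,2]$. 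The work splits into two parts: verifying the hypotheses of Theorem~\ref{thmB} on $(q_1,q_2)$ and $(q_3,q_4)$, and simplifying the resulting bound.

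\emph{Checking $(q_1,q_2)$.} Substituting $B_1=B_2=B_3=b$ into the formulas of Theorem~\ref{thmB} gives
$$q_1=-\frac{3b+2}{2},\qquad q_2=-\frac{2b^2+3b+1}{2}=-\frac{(2b+1)(b+1)}{2}.$$
Since $|q_1|=(3b+2)/2\ge 1/2$, I would aim to place $(q_1,q_2)$ in $\Theta_4$. This requires $q_2\le -\tfrac23(|q_1|+1)=-(3b+4)/3$, which is equivalent to $3(2b^2+3b+1)\ge 2(3b+4)$, i.e. $6b^2+3b-5\ge 0$. Putting $b=2(1-\alpha)$ turns this into
$$24\alpha^2-54\alpha+25\ge 0.$$
On $[0,1)$ this holds precisely for $\alpha\le\alpha_0=(27-\sqrt{129})/24$. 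This step is where the restriction $\alpha\in[0,\alpha_0]$ comes from, and it is the only place where real care is needed.

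\emph{Checking $(q_3,q_4)$.} Here
$$q_3=\frac{3b+4}{2}\in(2,5],\qquad q_4=\frac{(b+1)(b+2)}{2}.$$
If $q_3\le 4$ (i.e. $b\le 4/3$), I use $\Theta_6$. The condition $q_4\ge (q_3^2+8)/12$ reduces to $24(b^2+3b+2)\ge 9b^2+24b+48$, i.e. $15b^2+48b\ge 0$, which is true. If $q_3\ge 4$, I use $\Theta_7$. The condition $q_4\ge \tfrac23(q_3-1)=(3b+2)/3$ reduces to $3b^2+3b+2\ge 0$, which is also true. So $(q_3,q_4)\in\Theta_6\cup\Theta_7$ for every $\alpha\in[0,1)$.

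\emph{Simplifying the bound.} With $B_1=B_2=B_3=b$, the two factors in Theorem~\ref{thmB} become
$$B_1^3+3B_1B_2+2B_3+6B_1=b(b^2+3b+8),$$
$$2B_1^4+3B_1^2B_2+6B_1^2-2B_1B_3+3B_2^2=b^2(2b^2+3b+7).$$
The second is positive, so the absolute value can be dropped. The bound is therefore $b^3(b^2+3b+8)(2b^2+3b+7)/36$. Substituting $b=2(1-\alpha)$ gives
$$b^3=8(1-\alpha)^3,\qquad b^2+3b+8=2(2\alpha^2-7\alpha+9),\qquad 2b^2+3b+7=8\alpha^2-22\alpha+21.$$
Multiplying these, the total constant is $16/36=4/9$, which yields the stated bound.

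\emph{Sharpness.} Sharpness is inherited from the extremal mapping (\ref{extB}) of Theorem~\ref{thmB}, taken with this $\Phi$.
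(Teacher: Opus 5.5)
Your proposal is correct and takes the same route as the paper, which obtains this corollary by substituting $\Phi(\zeta)=(1+(1-2\alpha)\zeta)/(1-\zeta)$, so that $B_1=B_2=B_3=2(1-\alpha)$, into Theorem~\ref{thmB}, with sharpness coming from the extremal mapping (\ref{extB}). Your checks are accurate and fill in details the paper leaves unstated: $(q_1,q_2)\in\Theta_4$ exactly when $24\alpha^2-54\alpha+25\ge 0$, $(q_3,q_4)\in\Theta_6\cup\Theta_7$, and the factorization gives the constant $4/9$.
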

\begin{corollary}
    Let $f: \mathbb{B} \rightarrow \mathbb{C}$ with $f(0)=1$ and $F(z)= z f(z) \in \mathcal{SS}^*_\beta(\mathbb{B})$. Then
 $$   \Big\vert A_2^3-2 A_2 A_3^2- A_2 A_4^2 + 2 A_3^2 A_4 \Big\vert \leq  \frac{4}{81} \beta ^3 (17 \beta ^2+10 ) (47 \beta ^2+16 )$$
   for $\beta \in [\beta_0,1]$,  where $\beta_0 = (8 + \sqrt{346})/47\approx 0.56598$ is a root of
\begin{equation*}
       47 \beta ^2 -16 \beta -6 =0
\end{equation*}
   and  $A_2$, $A_3$, $A_4$ are given by (\ref{A2A3A4F}).  The estimate is sharp.
\end{corollary}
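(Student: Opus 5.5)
The plan is to apply Theorem~\ref{thmB} with $\Phi(\zeta)=((1+\zeta)/(1-\zeta))^\beta$. This reduces the proof to three tasks: computing $B_1,B_2,B_3$, checking that the two parameter pairs lie in the admissible regions of Lemma~\ref{lemma2}, and simplifying the bound. Sharpness then comes from the extremal mapping (\ref{extB}) of Theorem~\ref{thmB}, which for this $\Phi$ is strongly starlike of order $\beta$.

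\emph{Coefficients.} Expanding $\Phi(\zeta)=\exp\bigl(\beta\log\frac{1+\zeta}{1-\zeta}\bigr)=\exp\bigl(2\beta(\zeta+\zeta^3/3+\cdots)\bigr)$ gives
$$B_1=2\beta,\qquad B_2=2\beta^2,\qquad B_3=\frac{4\beta^3+2\beta}{3}.$$
Substituting these into the formulas of Theorem~\ref{thmB} yields
$$q_1=-4\beta,\qquad q_2=-\frac{47\beta^2-2}{6},\qquad q_3=5\beta,\qquad q_4=\frac{26\beta^2+1}{3}.$$

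\emph{Membership conditions.} This is the only step with real content; everything else is algebra.
\begin{itemize}
\item For $(q_1,q_2)$ I will use $\Theta_4$. The requirement $|q_1|=4\beta\ge 1/2$ holds because $\beta\ge\beta_0>1/8$. The requirement $q_2\le -\frac23(4\beta+1)$ is equivalent to $47\beta^2-2\ge 16\beta+4$, that is, $47\beta^2-16\beta-6\ge 0$. Since $\beta_0$ is the positive root of this quadratic, this holds precisely for $\beta\ge\beta_0$, which explains the threshold in the statement.
\item For $(q_3,q_4)$ I split according to $|q_3|=5\beta$.
\begin{itemize}
\item When $\beta\in[\beta_0,4/5]$, we have $2\le 5\beta\le 4$ (note $\beta_0>2/5$), so I use $\Theta_6$. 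Its condition $\frac{26\beta^2+1}{3}\ge\frac{25\beta^2+8}{12}$ reduces to $79\beta^2\ge 4$, which is true.
\item When $\beta\in[4/5,1]$, I use $\Theta_7$. Its condition $26\beta^2+1\ge 10\beta-2$ always holds, since the quadratic $26\beta^2-10\beta+3$ has negative discriminant.
\end{itemize}
\end{itemize}
Hence Theorem~\ref{thmB} applies for every $\beta\in[\beta_0,1]$.

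\emph{Evaluating the bound.} The two factors in Theorem~\ref{thmB} simplify to
$$B_1^3+3B_1B_2+2B_3+6B_1=\frac{4\beta(17\beta^2+10)}{3}$$
and
$$2B_1^4+3B_1^2B_2+6B_1^2-2B_1B_3+3B_2^2=\frac{4\beta^2(47\beta^2+16)}{3},$$
where the second is positive, so its absolute value is itself. Multiplying the two and dividing by $36$ gives $\frac{16}{324}\beta^3(17\beta^2+10)(47\beta^2+16)=\frac{4}{81}\beta^3(17\beta^2+10)(47\beta^2+16)$, which is the claimed bound.
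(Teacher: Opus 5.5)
This is the paper's route: the corollary is obtained by specializing Theorem~\ref{thmB} to $\Phi(\zeta)=((1+\zeta)/(1-\zeta))^\beta$. Your coefficients, the $\Theta_4$ check that produces $\beta_0$, and the final simplification are correct, and your check of the region conditions is more explicit than the paper, which only states the substitution.

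There is one arithmetic slip. In fact $q_4=\frac{B_1^3+3B_1B_2+2B_3}{2B_1}=\frac{17\beta^2+1}{3}$, not $\frac{26\beta^2+1}{3}$; this is consistent with your own factor $\frac{4\beta(17\beta^2+10)}{3}$. With the correct value, the $\Theta_6$ condition becomes $43\beta^2\ge 4$ and the $\Theta_7$ condition becomes $17\beta^2-10\beta+3\ge 0$. Both still hold on the relevant ranges, so the argument goes through unchanged.
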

\section{Generalized Zalcman conjecture}
   In this section, we extend the bound given in Theorem \hyperref[thmmB]{B}  to several complex variables.  To establish the desired results, we use the following lemma. Unless otherwise stated, $\Phi$ denotes the function given by \eqref{Phi} throughout the section.
\begin{lemma}\label{Lmm7}
Let $h$ be a holomorphic function on $\mathbb{U}$ such that $h(\mathbb{U}) \subset \Phi(\mathbb{U})$ and $h(0)=\Phi(0)$. Then
\begin{align*}
    \Big\vert (h'(0))^3 -\frac{h'''(0)}{6} \Big\vert \leq
\left\{
\begin{array}{ll}
      B_1 & \text{if} \; (q_5,q_6) \in  \Theta_1\cup\Theta_2\cup\{(2,1)\},   \\ \\
      \vert B_3-  B_1^3   \vert & \text{if} \; (q_5,q_6) \in \cup_{i=3}^{7} \Theta_i ,  \\ \\
        \dfrac{2 \sqrt{3}(B_1 + 2 \vert B_2\vert)^{3/2}}{9\sqrt{B_1 - B_1^3 + B_3 + 2 \vert B_2\vert}} & \text{if}\; (q_5,q_6) \in  \Theta_8\cup\Theta_9,
\end{array}
\right.
\end{align*}
   where
\begin{equation*}\label{q5q6}
    q_5 =\frac{2 B_2}{B_1} \;\; \text{and}\;\; q_6 =  \frac{B_3- B_1^3}{B_1}.
\end{equation*}
\end{lemma}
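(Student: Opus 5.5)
The plan is to reduce the functional $(h'(0))^3-\frac{h'''(0)}{6}$ to the expression $c_3+q_1c_1c_2+q_2c_1^3$ of Lemma~\ref{lemma2}, exactly as in the proofs of Lemma~\ref{lm3} and Lemma~\ref{lmA4}. As there, $h(0)=\Phi(0)$ and $h(\mathbb{U})\subset\Phi(\mathbb{U})$ give $h\prec\Phi$. Hence $h=\Phi\circ\omega$ for some $\omega(\zeta)=\sum_{n\ge1}c_n\zeta^n\in\mathcal{B}_0$, and the coefficient relations (\ref{h1h2Lm3}) hold:
$$h'(0)=B_1c_1,\qquad \frac{h'''(0)}{6}=B_3c_1^3+2B_2c_1c_2+B_1c_3 .$$

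Substituting these, I expect the identity
$$(h'(0))^3-\frac{h'''(0)}{6}=(B_1^3-B_3)c_1^3-2B_2c_1c_2-B_1c_3=-B_1\Big(c_3+\frac{2B_2}{B_1}c_1c_2+\frac{B_3-B_1^3}{B_1}c_1^3\Big).$$
Since $B_1>0$, this gives
$$\Big\vert (h'(0))^3-\frac{h'''(0)}{6}\Big\vert=B_1\,\vert c_3+q_5c_1c_2+q_6c_1^3\vert ,$$
with $q_5,q_6$ as in the statement. Both are real because all $B_i$ are real, so Lemma~\ref{lemma2} applies with $(q_1,q_2)=(q_5,q_6)$.

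Next I take the three cases of Lemma~\ref{lemma2} in turn.
\begin{itemize}
\item On $\Theta_1\cup\Theta_2\cup\{(2,1)\}$ the bound is $1$, giving $B_1$.
\item On $\cup_{i=3}^7\Theta_i$ the bound is $\vert q_6\vert$, giving $B_1\vert q_6\vert=\vert B_3-B_1^3\vert$.
\item On $\Theta_8\cup\Theta_9$, I rewrite $\vert q_5\vert+1=(B_1+2\vert B_2\vert)/B_1$ and $\vert q_5\vert+q_6+1=(B_1+2\vert B_2\vert+B_3-B_1^3)/B_1$. Then
$$B_1\cdot\frac{2}{3}\,\frac{B_1+2\vert B_2\vert}{B_1}\Big(\frac{B_1+2\vert B_2\vert}{3(B_1-B_1^3+B_3+2\vert B_2\vert)}\Big)^{1/2}=\frac{2\sqrt3\,(B_1+2\vert B_2\vert)^{3/2}}{9\sqrt{B_1-B_1^3+B_3+2\vert B_2\vert}},$$
which is the third bound in the statement.
\end{itemize}

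The only point needing care is the third case: the square root must be well defined. This follows because on $\Theta_8\cup\Theta_9$ we have $q_6\ge-\frac23(\vert q_5\vert+1)$, so $\vert q_5\vert+q_6+1\ge\frac13(\vert q_5\vert+1)>0$. Hence $B_1-B_1^3+B_3+2\vert B_2\vert>0$. Beyond this, I expect the argument to be routine algebra. If sharpness is wanted later, it would come from composing $\Phi$ with the extremal Schwarz functions listed after Lemma~\ref{lemma2}.
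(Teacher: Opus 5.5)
Your proposal is correct and is the paper's proof: the paper also uses the coefficient relations (\ref{h1h2Lm3}) to write $\bigl\vert (h'(0))^3-\frac{h'''(0)}{6}\bigr\vert = B_1\vert c_3+q_5c_1c_2+q_6c_1^3\vert$, and then applies Lemma~\ref{lemma2} with $(q_1,q_2)=(q_5,q_6)$ in each case. You additionally simplify the $\Theta_8\cup\Theta_9$ bound explicitly and check that the radicand is positive, both of which the paper leaves implicit.
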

\begin{proof}
    In view of (\ref{h1h2Lm3}), we obtain
\begin{align*}
  \left\vert (h'(0))^3  - \frac{h'''(0)}{6}  \right\vert &=  B_1 \left\vert c_3 + q_5 c_1 c_2 + q_6 c_1^3  \right\vert.
\end{align*}
   The required estimate follows by applying Lemma~\ref{lemma2} to the above equation.
\end{proof}

\begin{theorem}\label{thmZalc}
   Let $f\in \mathcal{H}(\mathbb{B},\mathbb{C})$ with $f(0)=1$ and define $F(z)= z f(z)$. If $(D F(z))^{-1} F(z) \in \mathcal{M}_\Phi$,   then
\begin{equation*}
      \left\vert A_2 A_3 - A_4 \right\vert  \leq
\begin{aligned}
\left\{
\begin{array}{ll}
      \dfrac{B_1}{3} & \text{if} \; (q_5,q_6) \in  \Theta_1\cup\Theta_2\cup\{(2,1)\},   \\ \\
      \dfrac{1}{3} \left\vert B_3-  B_1^3   \right\vert & \text{if} \; (q_5,q_6) \in \cup_{i=3}^{7} \Theta_i ,  \\ \\
       \dfrac{2 \sqrt{3}(B_1 + 2 \vert B_2\vert)^{3/2}}{27\sqrt{B_1 - B_1^3 + B_3 + 2 \vert B_2\vert}}  & \text{if}\; (q_5,q_6) \in  \Theta_8\cup\Theta_9,
\end{array}
\right.
\end{aligned}
\end{equation*}
  where $q_5 ={2 B_2}/{B_1},$  $q_6 =  {(B_3- B_1^3)}/{B_1} $
   and $A_2$, $A_3$, $A_4$ are given by (\ref{A2A3A4F}). Moreover, the above estimate is sharp.
\end{theorem}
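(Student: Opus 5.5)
We follow the proof of Theorem~\ref{thmB}. Fix $z\in X\setminus\{0\}$, put $z_0=z/\|z\|$, and define $h$ on $\mathbb{U}$ by $h(\zeta)=\zeta/l_z((DF(\zeta z_0))^{-1}F(\zeta z_0))$ for $\zeta\neq0$ and $h(0)=1$. As in that proof, the hypothesis $(DF(z))^{-1}F(z)\in\mathcal{M}_\Phi$ gives $h(\mathbb{U})\subset\Phi(\mathbb{U})$ and $h(0)=\Phi(0)$, so $h\prec\Phi$. Identity (\ref{A4Expre}) then expresses $A_2,A_3,A_4$ through $h'(0),h''(0),h'''(0)$:
\begin{align*}
A_2&=h'(0),\qquad A_3=\tfrac12\Big(\tfrac{h''(0)}{2}+(h'(0))^2\Big),\\
A_4&=\tfrac13\Big(\tfrac{h'''(0)}{6}+\tfrac{(h'(0))^3}{2}+\tfrac{3h'(0)h''(0)}{4}\Big).
\end{align*}

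Next we compute $A_2A_3-A_4$. The $h'(0)h''(0)$ terms are $\tfrac14 h'(0)h''(0)$ in $A_2A_3$ and $\tfrac14 h'(0)h''(0)$ in $A_4$, so they cancel. The cubic terms give $\tfrac12(h'(0))^3-\tfrac16(h'(0))^3=\tfrac13(h'(0))^3$. Hence
$$A_2A_3-A_4=\frac13\Big((h'(0))^3-\frac{h'''(0)}{6}\Big).$$
Applying Lemma~\ref{Lmm7} in each of the three regions for $(q_5,q_6)$ yields exactly the three stated bounds, each being one third of the corresponding bound in Lemma~\ref{Lmm7}. The main obstacle is only checking this cancellation, and it is routine.

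For sharpness we use the Schwarz function $\omega$ that is extremal in Lemma~\ref{lemma2} for the relevant region: $\omega(\zeta)=\zeta^3$ in the first case, $\omega(\zeta)=\zeta$ in the second, and $\omega(\zeta)=\zeta(\rho-\zeta)/(1-\rho\zeta)$ of Cho et al.\ in the third. We then define
$$F(z)=z\exp\int_0^{l_u(z)}\frac{\Phi(\omega(t))-1}{t}\,dt,\qquad \|u\|=1.$$
By (\ref{newe}), $\|z\|/l_z((DF(z))^{-1}F(z))=\Phi(\omega(l_u(z)))\in\Phi(\mathbb{U})$ whenever $l_z=l_u$. A direct check, as for (\ref{extB}), gives $(DF(z))^{-1}F(z)\in\mathcal{M}_\Phi$. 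Taking $z=ru$, the function $h$ above is exactly $\Phi\circ\omega$, whose coefficients $c_1,c_2,c_3$ attain equality in Lemma~\ref{lemma2}. Hence $|A_2A_3-A_4|$ equals the stated bound.

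One adjustment may be needed in the third case. If $q_5$ is negative, i.e.\ $B_2<0$, the formula in Lemma~\ref{lemma2} uses $|q_1|$, so the extremal function should be replaced by a rotation $-\omega(-\zeta)$ or a similar choice. Accordingly we will replace $\rho$ by the value computed with $|q_5|$, matching the $|B_2|$ appearing in the bound.
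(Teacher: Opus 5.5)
Your proposal is correct and follows the paper's proof: the same reduction $A_2A_3-A_4=\frac13\big((h'(0))^3-\frac{h'''(0)}{6}\big)$ via (\ref{A4Expre}), then Lemma~\ref{Lmm7}, with sharpness from the same mappings $z\exp\int_0^{l_u(z)}\frac{\Phi(\omega(t))-1}{t}\,dt$ for $\omega(\zeta)=\zeta^3$, $\omega(\zeta)=\zeta$, and the Cho et al.\ function. Your extra step for $B_2<0$ (replace $\omega$ by $-\omega(-\zeta)$ and compute $\rho$ from $|q_5|$) is a real refinement: the paper's $\rho$ uses $B_1+2B_2$, which matches the stated bound only when $B_2\ge 0$.
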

\begin{proof}
   Following the same methodology employed in the proof of Theorem~\ref{thmB}, and combining~(\ref{A4Expre}) with~(\ref{A2A3A4F}), we get
\begin{align*}
     \left\vert A_2 A_3 - A_4 \right\vert   = \frac{1}{3} \bigg\vert (h'(0))^3 - \frac{h'''(0)}{6} \bigg\vert.
\end{align*}
   Applying Lemma~\ref{Lmm7} to the preceding equation, the required estimate follows immediately.

    In order to prove the sharpness, let us define the mapping $\tilde{F}_1$ as
\begin{equation}\label{extBZalc}
    \tilde{F}_1(z) = z \exp \int_0^{l_u(z)} \frac{( \Phi(t^3)-1) }{t}dt, \quad z\in \mathbb{B}, \quad \vert\vert u \vert\vert=1.
\end{equation}
  Note that $ (D  \tilde{F}_1(z))^{-1}  \tilde{F}_1(z) \in \mathcal{M}_\phi$ and a straightforward computation reveals
  $$  \frac{D^2  \tilde{F}_1(0) (z^2)}{2!}=  0, \; \;  \frac{D^3  \tilde{F}_1(0) (z^3)}{3!} = 0\;\; \text{and}\;\;  \frac{D^4  \tilde{F}_1(0) (z^4)}{4!}  =\frac{1}{3} B_1 (l_u(z))^3 z, $$
   which directly yield
  $$  \frac{l_z(D^2  \tilde{F}_1(0) (z^2))}{2!}= 0, \;\; \frac{l_z (D^3  \tilde{F}_1(0) ( z^3 ))}{3!} = 0 \;\;\text{and}\;\; \frac{l_z (D^4  \tilde{F}_1(0) (z^4))}{4!}  =\frac{1}{3} B_1 (l_u(z))^3 \|z\|,$$
   respectively. Upon setting  $z = r u$ $(0< r <1)$, we obtain
\begin{equation}\label{cftBZalc}
     \frac{l_z(D^2  \tilde{F}_1(0) (z^2))}{2! \|z\|^2}=  0, \; \frac{l_z (D^3 \tilde{F}_1(0) ( z^3) ) }{3! \| z \|^3}  =0 \; \text{and}\; \frac{l_z (D^4  \tilde{F}_1(0) (z^4))}{4! \| z \|^4}  =\frac{1}{3} B_1.
\end{equation}
   Thus, combining (\ref{A2A3A4F}) and (\ref{cftBZalc}), for the mapping $\tilde{F}_1$ defined in (\ref{extBZalc}), we deduce that
\begin{align*}
      \left\vert A_2 A_3 - A_4 \right\vert  =  \frac{B_1}{3},
\end{align*}
   which confirms the sharpness of the bound when $(q_5,q_6)\in \Theta_1\cup\Theta_2\cup\{(2,1)\}$.
   Using the same argument, one can verify that equality is attained by the mappings $\tilde{F}_2$ and $\tilde{F}_3$ for $(q_5,q_6)\in \bigcup_{i=3}^{7}\Theta_i$ and $(q_5,q_6)\in \Theta_8\cup\Theta_9$, respectively, which are defined by
\begin{equation}\label{F2Ext}
    \tilde{F}_2(z) = z \exp \int_0^{l_u(z)} \frac{( \Phi(t)-1) }{t}dt, \quad z\in \mathbb{B}, \quad \vert\vert u \vert\vert=1
\end{equation}
   and
\begin{equation}\label{F3Ext}
    \tilde{F}_3(z) = z \exp \int_0^{l_u(z)} \frac{( \Phi(\omega(t))-1) }{t}dt, \quad z\in \mathbb{B}, \quad \vert\vert u \vert\vert=1,
\end{equation}
   where
    $$\omega(\zeta)=\frac{\zeta(\rho-\zeta)}{1-\rho \zeta}\; \text{and}\; \rho=\sqrt{\frac{B_1+2 B_2}{3 ( B_1 + 2 B_2 + B_3 -B_1^3)}}, \quad \zeta \in \mathbb{U}.$$
     Consequently, the estimate is sharp in all cases.
\end{proof}
\subsection{Special cases}
  By taking
  $\phi(\zeta)=(1+\zeta)/(1-\zeta)$, $\phi(\zeta)=(1+(1-2\alpha)\zeta)/(1-\zeta)$ and $\phi(\zeta)=((1+\zeta)/(1-\zeta))^\beta$ in Theorem~\ref{thmZalc}, we obtain the following bounds for the classes $\mathcal{S}^*(\mathbb{B})$, $\mathcal{S}^*_\alpha(\mathbb{B})$ and $\mathcal{SS}^*_\beta(\mathbb{B})$, respectively.
\begin{corollary}
    Let $f: \mathbb{B} \rightarrow \mathbb{C}$ with $f(0)=1$ and $F(z)= z f(z) \in \mathcal{S}^*(\mathbb{B})$. Then
\begin{equation*}
\begin{aligned}
    \left\vert A_2 A_3 - A_4 \right\vert &\leq 2,
\end{aligned}
\end{equation*}
   where
    $A_2$, $A_3$, $A_4$ are given by (\ref{A2A3A4F}). Equality is attained by the mapping
    $$ \tilde{F}_2(z) =\frac{z}{(1-l_u(z))^2}, \quad z\in \mathbb{B}, \quad \vert\vert u \vert\vert=1. $$
    Hence, the bound is sharp.
\end{corollary}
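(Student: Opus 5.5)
This corollary follows from Theorem~\ref{thmZalc} once $\Phi(\zeta)=(1+\zeta)/(1-\zeta)$ is substituted. As noted at the end of the introduction, $F\in\mathcal{S}^*(\mathbb{B})$ is then the same as $(DF(z))^{-1}F(z)\in\mathcal{M}_\Phi$. The plan has three steps: compute the coefficients of $\Phi$, locate $(q_5,q_6)$ among the regions of Lemma~\ref{lemma2}, and evaluate the matching branch of the bound.

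First, expanding $(1+\zeta)/(1-\zeta)=1+2\zeta+2\zeta^2+2\zeta^3+\cdots$ gives $B_1=B_2=B_3=2$. Therefore
$$q_5=\frac{2B_2}{B_1}=2,\qquad q_6=\frac{B_3-B_1^3}{B_1}=\frac{2-8}{2}=-3.$$

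Second, we identify the region. Since $|q_5|=2\geq \tfrac12$ and $-\tfrac23(|q_5|+1)=-2$, we have $q_6=-3\leq -2$, so $(q_5,q_6)\in\Theta_4\subset\bigcup_{i=3}^7\Theta_i$. We should also rule out overlap with the other regions, since the theorem's case split assumes the regions are used consistently. The point is not in $\Theta_8\cup\Theta_9$, because those require $q_6\geq -\tfrac23(|q_5|+1)=-2$. It is not in $\Theta_2$, because $\tfrac{4}{27}\cdot 27-3=1>-3$. It is not in $\Theta_1$ or in $\{(2,1)\}$. The second branch of Theorem~\ref{thmZalc} then gives
$$|A_2A_3-A_4|\leq \frac13|B_3-B_1^3|=\frac13|2-8|=2.$$

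Third, for sharpness we use the mapping $\tilde F_2$ from (\ref{F2Ext}). With this $\Phi$ we have $(\Phi(t)-1)/t=2/(1-t)$, so $\int_0^{l_u(z)}2/(1-t)\,dt=-2\log(1-l_u(z))$. Hence $\tilde F_2(z)=z/(1-l_u(z))^2$, which is exactly the mapping named in the statement. To confirm equality directly, take $z=ru$: then $A_2=2$, $A_3=3$, $A_4=4$, and $|6-4|=2$.

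The main obstacle is only the bookkeeping of checking which region $(q_5,q_6)$ falls in. There is no genuine analytic difficulty, since everything reduces to Theorem~\ref{thmZalc}.
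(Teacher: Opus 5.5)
Your proposal is correct and follows the paper's own route. You specialize Theorem~\ref{thmZalc} to $\Phi(\zeta)=(1+\zeta)/(1-\zeta)$, so that $B_1=B_2=B_3=2$ and $(q_5,q_6)=(2,-3)\in\Theta_4$; the second branch then gives $\frac{1}{3}\vert B_3-B_1^3\vert=2$, and $\tilde{F}_2$ from (\ref{F2Ext}) reduces to $z/(1-l_u(z))^2$. Your extra checks, that $(2,-3)$ lies in no other region and that $A_2=2$, $A_3=3$, $A_4=4$ at $z=ru$, only make explicit what the paper leaves to the reader.
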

\begin{corollary}
    Let $f: \mathbb{B} \rightarrow \mathbb{C}$ with $f(0)=1$ and $F(z)= z f(z) \in \mathcal{S}^*_\alpha(\mathbb{B})$. Then
\begin{align*}
      \left\vert A_2 A_3 - A_4 \right\vert \leq
\left\{
\begin{array}{ll}
        \dfrac{2(3 - 11 \alpha + 12 \alpha^2 - 4 \alpha^3)}{3}      &\;\; \text{if} \; \alpha \in (0,\alpha_1], \\ \\
        \dfrac{2 (1-\alpha)}{3 \sqrt{\alpha (2-\alpha)}}   &\;\;\text{if} \; \alpha \in [\alpha_1,1),
\end{array}
\right.
\end{align*}
    where $\alpha_1 = (2-\sqrt{3})/4\approx 0.133975$ is a root of the equation
\begin{equation*}
      4 \alpha^2 -8 \alpha  + 1 =0
\end{equation*}
   and  $A_2$, $A_3$, $A_4$ are given by (\ref{A2A3A4F}).   When $\alpha \in (0,\alpha_1]$, it follows that $(q_5,q_6)\in \Theta_4$. Therefore, equality case holds for the mapping
   $$\tilde{F}_2(z)=  \frac{z}{\left(1-l_u(z)\right)^{2(1-\alpha)}} , \;\; \;\; z\in \mathbb{B},\;\; \vert\vert u \vert\vert=1 .$$
    For  $\alpha \in (\alpha_0,1)$, we have $(q_5,q_6)\in \Theta_8$. In this case, equality is attained by the mapping
   $$ \tilde{F}_3(z) =   z \exp \int_0^{l_u(z)} \bigg(\frac{2 (1- \alpha ) (\rho - t )}{t^2-2 t  \rho +1}\bigg) dt, \;\; \;\; z\in \mathbb{B},\;\; \vert\vert u \vert\vert=1 ,$$
  where  $ \rho = {1}/{(2 \sqrt{ \alpha (2   -\alpha)}})$.
   Hence, the bound is sharp.
\end{corollary}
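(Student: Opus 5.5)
The plan is to obtain this corollary by specializing Theorem~\ref{thmZalc} to $\Phi(\zeta)=(1+(1-2\alpha)\zeta)/(1-\zeta)$. For this $\Phi$ the condition $(D F(z))^{-1}F(z)\in\mathcal{M}_\Phi$ is exactly $F\in\mathcal{S}^*_\alpha(\mathbb{B})$, as noted in the introduction. Expanding,
$$\Phi(\zeta)=1+2(1-\alpha)(\zeta+\zeta^2+\zeta^3+\cdots),$$
so $B_1=B_2=B_3=b:=2(1-\alpha)$. The parameters of Theorem~\ref{thmZalc} are then
$$q_5=\frac{2B_2}{B_1}=2,\qquad q_6=\frac{B_3-B_1^3}{B_1}=1-b^2=1-4(1-\alpha)^2 .$$
Since $q_5=2$ is fixed, the proof reduces to deciding in which region $\Theta_i$ the point $(2,q_6)$ lies as $\alpha$ varies.

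\textbf{Case split.}
\begin{enumerate}
\item With $|q_5|=2\ge 1/2$, membership in $\Theta_4$ means $q_6\le -\tfrac23(|q_5|+1)=-2$. This is $(1-\alpha)^2\ge 3/4$, i.e. $\alpha\le 1-\sqrt3/2=\alpha_1$, the smaller root of $4\alpha^2-8\alpha+1=0$ (approximately $0.133975$). There the second alternative of Theorem~\ref{thmZalc} gives
$$\tfrac13|B_3-B_1^3|=\tfrac13(b^3-b)=\tfrac23(1-\alpha)(3-8\alpha+4\alpha^2)=\tfrac23(3-11\alpha+12\alpha^2-4\alpha^3).$$
\item For $\alpha\ge\alpha_1$ we have $-2\le q_6$. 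Also $q_6\le 1$ always, and the upper boundary of $\Theta_8$ at $|q_1|=2$ is $\tfrac4{27}\cdot 27-3=1$. Hence $(q_5,q_6)\in\Theta_8$, and the third alternative applies. Substituting $B_i=b$ gives
$$\frac{2\sqrt3\,(3b)^{3/2}}{27\sqrt{4b-b^3}}=\frac{2b}{3\sqrt{4-b^2}}.$$
Using $4-b^2=4\alpha(2-\alpha)$, this equals $\dfrac{2(1-\alpha)}{3\sqrt{\alpha(2-\alpha)}}$.
\end{enumerate}
The two formulas agree at $\alpha=\alpha_1$, as they should, since that point lies on the common boundary of $\Theta_4$ and $\Theta_8$.

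\textbf{Sharpness.} I will use the extremal mappings supplied by Theorem~\ref{thmZalc}.
\begin{itemize}
\item In the $\Theta_4$ case, $\tilde F_2$ from (\ref{F2Ext}) is used. Here $(\Phi(t)-1)/t=2(1-\alpha)/(1-t)$, and integrating gives $\tilde F_2(z)=z(1-l_u(z))^{-2(1-\alpha)}$.
\item In the $\Theta_8$ case, $\tilde F_3$ from (\ref{F3Ext}) is used. The parameter becomes $\rho=\sqrt{3b/(3(4b-b^3))}=1/\sqrt{4-b^2}=1/(2\sqrt{\alpha(2-\alpha)})$. This satisfies $\rho\le1$ precisely when $\alpha\ge\alpha_1$, so $\omega$ is a genuine Schwarz function on this range. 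A short computation of $(\Phi(\omega(t))-1)/t=2(1-\alpha)\omega(t)/(t(1-\omega(t)))$ with $\omega(t)=t(\rho-t)/(1-\rho t)$ gives $2(1-\alpha)(\rho-t)/(1-2\rho t+t^2)$, which is the stated integrand.
\end{itemize}

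The only real subtlety is the region bookkeeping. I must check that the threshold is $\alpha_1=1-\sqrt3/2$ rather than another value. I also need the range to be $[\alpha_1,1)$ rather than $(\alpha_0,1)$: the statement's mention of $\alpha_0$ in the sharpness sentence appears to be a typo for $\alpha_1$. Finally, I must verify the admissibility $\rho\le1$ of the extremal $\omega$, which is exactly the same inequality.
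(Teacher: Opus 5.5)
Your proposal is correct and follows the paper's own route: specialize Theorem~\ref{thmZalc} with $B_1=B_2=B_3=2(1-\alpha)$, so that $(q_5,q_6)=(2,\,1-4(1-\alpha)^2)$ lies in $\Theta_4$ for $\alpha\le\alpha_1$ and in $\Theta_8$ for $\alpha\ge\alpha_1$, with $\tilde F_2$ and $\tilde F_3$ as the extremal mappings; your check that $\rho\le 1\iff\alpha\ge\alpha_1$ is a detail the paper leaves out. You are right that $\alpha_0$ should read $\alpha_1$, and your value $\alpha_1=1-\sqrt3/2=(2-\sqrt3)/2\approx 0.133975$ is the correct one, since the closed form $(2-\sqrt3)/4$ printed in the statement is also a typo (it is about $0.067$).
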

\begin{corollary}
    Let $f: \mathbb{B} \rightarrow \mathbb{C}$ with $f(0)=1$ and $F(z)= z f(z) \in \mathcal{SS}^*_\beta(\mathbb{B})$. Then
\begin{align*}
      \left\vert A_2 A_3 - A_4 \right\vert \leq
\left\{
\begin{array}{ll}
        \dfrac{2\beta}{3}     & \text{if} \;\beta \in (0,\beta_1] \\ \\
         \dfrac{2 \sqrt{2} \beta  (2 \beta +1)^{3/2}}{9 \sqrt{ 2 + 3 \beta - 5 \beta^2}}    & \text{if} \;\beta \in [\beta_1,\dfrac{2+\sqrt{34}}{10}], \\ \\
         \dfrac{2\beta( 10 \beta ^2 - 1)}{9} & \text{if} \;\beta \in  [\dfrac{2+\sqrt{34}}{10},1]
\end{array}
\right.
\end{align*}    where $\beta_1 \approx 0.559376$ is a root of the equation
\begin{equation*}
       16 \beta ^3+69 \beta ^2-15 \beta -16=0
\end{equation*}
   and  $A_2$, $A_3$, $A_4$ are given by (\ref{A2A3A4F}).
 For $\beta \in (0,\beta_1]$, we have  $(q_5,q_6)\in \Theta_1\cup \Theta_2$. The equality is attained by the mapping
   $$\tilde{F}_1(z)=  z \exp \int_0^{l_u(z)} \frac{1}{t}\Big(\Big(\frac{1+t^3}{1-t^3} \Big)^\beta -1 \Big) dt . $$
 For $\beta\in [\beta_1,(2+\sqrt{34})/10]$, we have $(q_5,q_6)\in \Theta_8.$ In this case, the extremal mapping is
 $$\tilde{F}_3 = z \exp \int_0^{l_u(z)} \frac{1}{t}\Big(\Big(\frac{1- t^2}{1+ t^2 - 2\rho t}\Big)^\beta -1 \Big) dt   ,$$
  where  $ \rho= \sqrt{{(1 + 2 \beta) }/{( 4  +6 \beta -10 \beta^2)}}.$
 Moreover, for $\beta \in [(2+\sqrt{34})/10,1]$, we have $(q_5,q_6)\in \Theta_3$. Consequently, the extremal mapping is given by
 $$\tilde{F}_2(z)= z \exp \int_0^{l_u(z)} \frac{1}{t}\Big(\Big(\frac{1+t}{1-t} \Big)^\beta -1 \Big) dt . $$
  Hence, the estimate is sharp for all $\beta \in (0,1]$.
\end{corollary}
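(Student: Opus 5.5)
We apply Theorem~\ref{thmZalc} with $\Phi(\zeta)=((1+\zeta)/(1-\zeta))^\beta$. The work is to compute $B_1,B_2,B_3$, locate $(q_5,q_6)$ among the regions $\Theta_i$ as $\beta$ varies in $(0,1]$, and simplify the resulting bounds.

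\emph{Coefficients.} Writing $(1+\zeta)/(1-\zeta)=\exp\big(2(\zeta+\zeta^3/3+\cdots)\big)$ and raising to the power $\beta$ gives
$$\Phi(\zeta)=\exp\Big(2\beta\zeta+\tfrac{2\beta}{3}\zeta^3+\cdots\Big)=1+2\beta\zeta+2\beta^2\zeta^2+\Big(\tfrac{4\beta^3}{3}+\tfrac{2\beta}{3}\Big)\zeta^3+\cdots.$$
Hence $B_1=2\beta$, $B_2=2\beta^2$ and $B_3=(4\beta^3+2\beta)/3$. Consequently
$$q_5=2\beta,\qquad q_6=\frac{1-10\beta^2}{3},$$
and $q_5\in(0,2]$ throughout.

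\emph{Locating $(q_5,q_6)$.}
\begin{itemize}[label={}]
\item For $\beta\le 1/4$ we have $|q_5|\le 1/2$ and $|q_6|\le 1/3$, so $(q_5,q_6)\in\Theta_1$.
\item For $1/4\le\beta\le 1$ we have $1/2\le q_5\le 2$, and we compare $q_6$ with the two curves bounding $\Theta_2$, $\Theta_8$ and $\Theta_4$.
\begin{itemize}[label={}]
\item The condition $q_6\ge \frac{4}{27}(2\beta+1)^3-(2\beta+1)$ becomes, after multiplying by $27$ and expanding, $16\beta^3+69\beta^2-15\beta-16\le 0$. This holds exactly for $\beta\le\beta_1$, since the cubic is negative at $\beta=1/4$ and increasing there. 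So $(q_5,q_6)\in\Theta_2$ for $\beta\in[1/4,\beta_1]$.
\item The condition $q_6\le -\frac23(2\beta+1)$ becomes $10\beta^2-4\beta-3\ge 0$, i.e. $\beta\ge(2+\sqrt{34})/10$. This places the point in $\Theta_4\subset\cup_{i=3}^7\Theta_i$.
\item In between, $(q_5,q_6)\in\Theta_8$.
\end{itemize}
\end{itemize}

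\emph{Simplifying the bounds.}
\begin{itemize}[label={}]
\item In the first regime the bound is $B_1/3=2\beta/3$.
\item In the last regime it is $\frac13|B_3-B_1^3|=\frac{2\beta}{9}(10\beta^2-1)$; the absolute value is removed because $10\beta^2>1$ there.
\item In the middle regime we use $B_1+2B_2=2\beta(1+2\beta)$ and $B_1-B_1^3+B_3+2B_2=\frac{4\beta}{3}(2+3\beta-5\beta^2)$. Substituting into the third formula of Theorem~\ref{thmZalc} gives $\frac{2\sqrt2\,\beta(2\beta+1)^{3/2}}{9\sqrt{2+3\beta-5\beta^2}}$.
\end{itemize}
As a consistency check, the three expressions agree at $\beta_1$ and at $(2+\sqrt{34})/10$.

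\emph{Sharpness.} This is inherited from Theorem~\ref{thmZalc}, via the mappings $\tilde F_1,\tilde F_3,\tilde F_2$ with the present $\Phi$.
\begin{itemize}[label={}]
\item $\tilde F_1$ and $\tilde F_2$ come from substituting $t^3$ and $t$ into $\Phi$.
\item For $\tilde F_3$, a direct computation gives $\Phi(\omega(t))=\big((1-t^2)/(1+t^2-2\rho t)\big)^\beta$ with $\omega(t)=t(\rho-t)/(1-\rho t)$.
\item Inserting the $B_i$ into the formula for $\rho$ yields $\rho=\sqrt{(1+2\beta)/(4+6\beta-10\beta^2)}$. One should check that $\rho\le 1$ in this range, so that $\omega\in\mathcal{B}_0$.
\end{itemize}

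\emph{Main obstacle.} The delicate step is the region bookkeeping: confirming the monotonicity and sign claims for the cubic and the quadratic on $[1/4,1]$, so that each $\beta$ lands in exactly the stated region. I would verify these by evaluating at the endpoints and checking derivative signs. I would also check that the last range is $\Theta_4$ rather than $\Theta_3$, since $|q_5|>1/2$ there; this is harmless because both lie in $\cup_{i=3}^7\Theta_i$.
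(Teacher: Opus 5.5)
Your proposal is correct and takes the same route as the paper, which simply specializes Theorem~\ref{thmZalc} to $\Phi(\zeta)=((1+\zeta)/(1-\zeta))^\beta$. Your $B_1=2\beta$, $B_2=2\beta^2$, $B_3=(4\beta^3+2\beta)/3$, the point $(q_5,q_6)=(2\beta,(1-10\beta^2)/3)$, the region breakpoints at $\beta_1$ and $(2+\sqrt{34})/10$, the simplified bounds and $\rho$ all check out. You are also right that on $[(2+\sqrt{34})/10,1]$ the point lies in $\Theta_4$, not in $\Theta_3$ as the statement says (because $q_5=2\beta>1/2$); this is harmless, since both sets lie in $\cup_{i=3}^{7}\Theta_i$.
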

\section*{Declarations}

\subsection*{Conflict of interest}
	The author declare that he has no conflict of interest.
\subsection*{Author Contribution}
    Each author contributed equally to the research and preparation of the manuscript.
\subsection*{Data Availability} Not Applicable.


\begin{thebibliography}{99}
\bibitem{AhuKhaRav} O.~P. Ahuja, K. Khatter and V. Ravichandran, Toeplitz determinants associated with Ma-Minda classes of starlike and convex functions, Iran. J. Sci. Technol. Trans. A Sci. {\bf 45} (2021), no.~6, 2021--2027.

\bibitem{AliThoVas} M.~F. Ali, D.~K. Thomas and A. Vasudevarao, Toeplitz determinants whose elements are the coefficients of analytic and univalent functions, Bull. Aust. Math. Soc. {\bf 97} (2018), no.~2, 253--264.

\bibitem{BroTsa} J.~E. Brown and A. Tsao, On the Zalcman conjecture for starlike and typically real functions, Math. Z. {\bf 191} (1986), no.~3, 467--474.

\bibitem{ChoKwoLecSim}  N. E. Cho,  O. S. Kwon,  A. Lecko and Y. J. Sim, Sharp estimates of generalized Zalcman functional of early coefficients for Ma-Minda type functions, Filomat {\bf 32} (2018), no.~18, 6267--6280.

\bibitem{Gir1} S. Giri, Second-order Toeplitz determinant for starlike mappings in one and higher dimensions, Anal. Math. Phys. {\bf 15} (2025), no.~4, Paper No. 96, 17 pp.

\bibitem{Giri2} S. Giri, Generalized Zalcman Conjecture for Starlike Mappings in Several Complex Variables, arXiv preprint arXiv:2601.22558 (2026).

\bibitem{GirKum1} S. Giri and S.~S. Kumar, Toeplitz determinants of logarithmic coefficients for starlike and convex functions, Bull. Sci. Math. {\bf 211} (2026), Paper No. 103845, 17 pp.


\bibitem{GirKum3} S. Giri and S.~S. Kumar, Toeplitz determinants in one and higher dimensions, Acta Math. Sci. Ser. B (Engl. Ed.) {\bf 44} (2024), no.~5, 1931--1944.

\bibitem{GirKum4} S. Giri and S.~S. Kumar, Toeplitz determinants for a class of holomorphic mappings in higher dimensions, Complex Anal. Oper. Theory {\bf 17} (2023), no.~6, Paper No. 86, 16 pp.

\bibitem{Goodman} A.W. Goodman, Univalent Functions, Mariner, Tampa (1983).

\bibitem{GraHamKoh} I.~R. Graham, H. Hamada and G. Kohr, Parametric representation of univalent mappings in several complex variables, Canad. J. Math. {\bf 54} (2002), no.~2, 324--351.

\bibitem{GraKoh} I. Graham and G. Kohr. Geometric Function Theory in One and Higher Dimensions. Vol. 255. Monographs and Textbooks in Pure and Applied Mathematics. New York: Marcel Dekker, Inc., 2003.

\bibitem{Ham4} H. Hamada, G. Kohr\ and\ P. Liczberski, Starlike mappings of order $\alpha$ on the unit ball in complex Banach spaces, Glas. Mat. Ser. III {\bf 36(56)} (2001), no.~1, 39--48.

\bibitem{Kohr2} G. Kohr\ and\ P. Liczberski, On strongly starlikeness of order alpha in several complex variables, Glas. Mat. Ser. III {\bf 33(53)} (1998), no.~2, 185--198.

\bibitem{Kru} S.~L. Krushkal, Proof of the Zalcman conjecture for initial coefficients, Georgian Math. J. {\bf 17} (2010), no.~4, 663--681.

\bibitem{LecSimSmi} A. Lecko, Y.~J. Sim and B. \'Smiarowska, The fourth-order Hermitian Toeplitz determinant for convex functions, Anal. Math. Phys. {\bf 10} (2020), no.~3, Paper No. 39, 11 pp.

\bibitem{Ma2} W.~C. Ma, The Zalcman conjecture for close-to-convex functions, Proc. Amer. Math. Soc. {\bf 104} (1988), no.~3, 741--744.

\bibitem{Ma} W.~C. Ma, Generalized Zalcman conjecture for starlike and typically real functions, J. Math. Anal. Appl. {\bf 234} (1999), no.~1, 328--339.

\bibitem{ObrTun} M. Obradovi\'c{} and N. Tuneski, Hermitian Toeplitz determinants for the class $\mathcal S$ of univalent functions, Armen. J. Math. {\bf 13} (2021), Paper No. 4, 10 pp.

\bibitem{ProSzy} D.~V. Prokhorov and J. Szynal, Inverse coefficients for $(\alpha ,\beta )$-convex functions, Ann. Univ. Mariae Curie-Sk\l odowska Sect. A {\bf 35} (1981), 125--143 (1984).

\bibitem{RavVer} V. Ravichandran and S. Verma, Generalized Zalcman conjecture for some classes of analytic functions, J. Math. Anal. Appl. {\bf 450} (2017), no.~1, 592--605.

\bibitem{XuHeXu} Z. Xu, P. He and Q.~H. Xu, Some refinements of the Fekete and Szeg\"o{} inequalities and Toeplitz determinants in one and higher dimensions, Complex Anal. Oper. Theory {\bf 19} (2025), no.~3, Paper No. 62, 22 pp.

\bibitem{XuLiuLiu} Q.~H. Xu, T.~S. Liu and X.~S. Liu, Fekete and Szeg\"o{} problem in one and higher dimensions, Sci. China Math. {\bf 61} (2018), no.~10, 1775--1788.

\bibitem{XuJia} Q.~H. Xu and T. Jiang, The generalized Toeplitz determinants for a class of holomorphic mappings in several complex variables, Complex Anal. Oper. Theory {\bf 18} (2024), no.~6, Paper No. 140, 16 pp.

\end{thebibliography}
\end{document}